\theoremstyle{plain}
\newtheorem{theorem}{Theorem}
\newtheorem{lemma}[theorem]{Lemma}
\newtheorem{proposition}[theorem]{Proposition}
\newtheorem{corollary}[theorem]{Corollary}
\theoremstyle{definition}
\newtheorem*{remark}{Remark}
\newcommand{\M}{\mathscr{M}}
\newcommand{\E}{\mathfrak{E}}
\newcommand{\BH}{\mathbb{B}(\mathcal{H})}
\newcommand{\Ha}{\mathcal{H}}
\newcommand{\al}{\alpha}
\newcommand{\1}{\mathbb{1}}
\newcommand{\ro}{\rho}
\newcommand{\tr}{\operatorname{tr}}
\renewcommand{\geq}{\geqslant}
\renewcommand{\leq}{\leqslant}
\DeclareMathOperator{\Lin}{Lin}
\DeclareMathOperator{\s}{s}
\begin{document}
\title[On Segal entropy]{On Segal entropy}
\author{Andrzej \L uczak}
\address{Faculty of Mathematics and Computer Science\\
         \L\'od\'z University\\
         ul. S. Banacha 22\\
         90-238 \L\'od\'z, Poland}

\email[Andrzej \L uczak]{andrzej.luczak@wmii.uni.lodz.pl}
\keywords{Segal entropy, semifinite von Neumann algebra}
\thanks{}
\subjclass[2010]{Primary: 46L53; Secondary: 81P45, 62B15}
\date{}
\begin{abstract}
 The paper is devoted to the investigation of Segal's entropy in semifinite von Neumann algebras. The following questions are dealt with: semicontinuity, the 'ideal-like' structure of  the linear span of the set of operators with finite entropy, and topological properties of the set of operators with finite as well as infinite entropy. In our analysis, full generality is aimed at, in particular, the operators for which the entropy is considered are not assumed to belong to the underlying von Neumann algebra, instead, they are arbitrary positive elements  in the space $L^1$ over the algebra.
\end{abstract}
\maketitle

\section*{Introduction}
In 1960 I. Segal \cite{S} introduced the notion of entropy for semifinite von Neumann algebras being a straightforward generalisation of the von Neumann entropy defined for the full algebra $\BH$ of all bounded linear operators on a Hilbert space by means of the canonical trace. While von Neumann's entropy became an all-important object for quantum mechanics, the role of Segal's entropy has been less significant, presumably because of the more general setup of an arbitrary semifinite von Neumann algebras in which this entropy appears. However, the rapid development of operator-algebraic methods in quantum theory where arbitrary von Neumann algebras play a crucial role makes it highly probable that also the Segal entropy will gain much more importance and interest.

Despite the obvious similarity in the definitions of both entropies, in the case of an arbitrary semifinite von Neumann algebra, where instead of the canonical trace we have a normal semifinite faithful trace, substantial differences between them arise. Perhaps the most fundamental one consists in the fact that while a normal (non-normalised) state on $\BH$ is represented by a positive operator of trace-class (the so-called `density matrix'), in the case of an arbitrary semifinite von Neumann algebra this `density matrix' can be an unbounded operator. This prompted Segal to consider only the states whose `density matrices' were in the algebra. In our analysis, we avoid this restriction.

In the paper, the following questions concerning Segal's entropy are investigated: its semicontinuity, the 'ideal-like' structure of the linear span of the set of operators with finite entropy, and topological properties of the set of operators with finite as well as infinite entropy. In particular, we show that this entropy is a function of first Baire's class, and give sufficient conditions for its lower and upper semicontinuity. Since von Neumann's entropy is a particular instance of Segal's entropy, our results yield also a condition for the continuity of von Neumann's entropy. Further, we show that for a finite von Neumann algebra, the linear span of the set of operators with finite entropy is invariant with respect to multiplication by the operators from the algebra. Finally, it is shown that the sets of elements with finite as well as infinite entropy are dense in the space of positive integrable operators, and that in the case when the underlying algebra is finite, the set of elements with finite entropy is of the first category --- both these properties are known to hold for von Neumann entropy.

\section{Preliminaries and notation}
Let $\M$ be a semifinite von Neumann algebra of operators acting on a Hilbert space $\Ha$, with a normal semifinite faithful trace $\tau$, identity $\1$, and predual $\M_*$. By $\|\cdot\|_\infty$ we shall denote the operator norm on $\M$. The set of positive functionals in $\M_*$ shall be denoted by $\M_*^+$. These functionals are sometimes referred to as (non-normalised) sta\-tes.

A densely defined closed operator $x$ is said to be \emph{affiliated with} $\M$ if for the polar decomposition
\[
 x=u|x|
\]
we have $u\in\M$, and the spectral projections of $|x|$ belong to $\M$. An operator $x$ affiliated with $\M$ is said to be \emph{measurable} if for some $\lambda_0$ we have $\tau(e[\lambda_0,\infty))<+\infty$, where $e([\lambda_0,\infty))$ is the spectral projection of $|x|$ corresponding to the interval $[\lambda_0,\infty)$.

The algebra of measurable operators $\widetilde{\M}$ is defined as a topological ${}^*$-algebra of operators on $\mathcal{H}$ affiliated with $\M$ with strong addition $\dotplus$ and strong multiplication $\cdot$, i.e.
\[
 x\dotplus y=\overline{x+y},\qquad x\cdot y=\overline{xy},\qquad x,y\in\widetilde{\M},
\]
where $\overline{x+y}$ and $\overline{xy}$ are the closures of the corresponding operators defined by addition and composition on the natural domains given by the intersections of the domains of the $x$ and $y$ and of the range of $y$ and the domain of $x$, respectively. It is known that such closures exist and give operators affiliated with $\M$. In what follows, we shall omit the dot in the symbols of these operations and write simply $x+y$ and $xy$ to denote $x\dotplus y$ and $x\cdot y$. In particular, for a finite von Neumann algebra $\M$ all operators affiliated with $\M$ are measurable. A great advantage, while dealing with measurable operators, is usually a lack of problems concerning their domains since for any such operators there is a common domain which is a core for them.

The domain of a linear operator $x$ on $\Ha$ will be denoted by $\mathcal{D}(x)$.

For each $\ro\in\M_*$, there is an operator $h$ affiliated with $\M$ such that
\[
 \ro(x)=\tau(xh)=\tau(hx), \quad x\in\M.
\]
The space of all such operators is denoted by $L^1(\M,\tau)$, and the correspondence above is one-to-one and isometric, where the norm on $L^1(\M,\tau)$, denoted by $\|\cdot\|_1$, is defined as
\[
 \|h\|_1=\tau(|h|), \quad h\in L^1(\M,\tau).
\]
(In the theory of noncommutative $L^p$-spaces for semifinite von Neumann algebras, it it shown that $\tau$ can be extended to the $h$'s as above; see  e.g. \cite{N,T,Y} for a detailed account of this theory.) Moreover, to hermitian functionals in $\M_*$ correspond selfadjoint operators in $L^1(\M,\tau)$, and to states in $\M_*$ --- positive operators in $L^1(\M,\tau)$. The set of these operators will be denoted by $L^1(\M,\tau)^+$.

For $\ro\in\M_*^+$, the corresponding element in $L^1(\M,\tau)^+$, called the \emph{density} of $\ro$, is denoted by $h_\ro$.

The \emph{Segal entropy} of $\ro$, denoted by $H(\ro)$, is defined as
\[
 H(\ro)=\tau(h_\ro\log h_\ro),
\]
i.e. for the spectral representation of $h_\ro$
\begin{equation}\label{spec}
 h_\ro=\int_0^\infty t\,e(dt),
\end{equation}
we have
\[
 H(\ro)=\int_0^\infty t\log t\,\tau(e(dt)).
\]
Accordingly, we define Segal's entropy for $h\in L^1(\M,\tau)^+$ by the formula
\[
 H(h)=\tau(h\log h)=\int_0^\infty t\log t\,\tau(e(dt)),
\]
where $h$ has the spectral representation as in \eqref{spec}. Obviously, Segal's entropy need not be defined for all $h\in L^1(\M,\tau)^+$. However, for finite von Neumann algebras we have, on account of the inequality
\[
 t\log t\geq t-1,
\]
the relation
\begin{align*}
 H(h)&=\int_0^\infty t\log t\,\tau(e(dt))\geq\int_0^\infty(t-1)\,\tau(e(dt))\\
 &=\tau\Big(\int_0^\infty t\,e(dt)\Big)-\tau\Big(\int_0^\infty e(dt)\Big)=\tau(h)-1>-\infty,
\end{align*}
showing that in this case Segal's entropy is well-defined (and nonnegative for the normalised states). It should be noted that the original Segal definition of entropy differs from ours by a minus sign before the trace. However, for the sake of having nonnegative entropy for normalised states on a finite von Neumann algebra we have adopted the definition as above. Let us recall that for $\M=\BH$ and the canonical trace `$\tr$' the von Neumann entropy of the density matrix $h$ with spectral decomposition
\[
 h=\sum_{n=1}^\infty \lambda_ne_n
\]
is defined as
\[
 S(h)=-\tr h\log h=-\sum_{n=1}^\infty \lambda_n\log\lambda_n\tr e_n,
\]
so for $\tau=\tr$ we have
\[
 H(h)=-S(h).
\]
Observe that von Neumann's entropy is well-defined on the whole of $L^1(\M,\tau)^+=$ nonnegative trace-class operators, since each such operator has its eigenvalues $\lambda_n$ converging to zero, so there are only a finite number of positive elements of the form $\lambda_n\log\lambda_n$.

In what follows, we shall speak of Segal's entropy for elements in $L^1(\M,\tau)^+$ instead of states.

\section{Semicontinuity of Segal entropy}
Put, for simplicity of notation,
\[
 f(t)=t\log t, \quad t\in[0,\infty).
\]
Then
\[
 H(h)=\tau(h\log h)=\tau(f(h))
\]
whenever $\tau(f(h))$ is well-defined. By $\E$ we shall denote  the set of elements in $L^1(\M,\tau)^+$ for which the entropy $H(h)$ exists while $\E_{\text{fin}}$ will stand for the set of elements with finite entropy, and $\E_\infty$ --- for the set of elements with infinite entropy; thus
\[
 \E=\E_{\text{fin}}\cup\E_\infty.
\]
In particular, for $\M$ finite or $\M=\BH$ we have
\[
 \E=L^1(\M,\tau)^+.
\]

For arbitrary $0<m<M<+\infty$, let $f_{m,M}\colon L^1(\M,\tau)^+\to L^1(\M,\tau)$ be defined as
\begin{align*}
 f_{m,M}(h)&=h\log(m\1+h)-h\log(m+1)\\
 &+h\log(M+1)-h\log(M\1+h).
\end{align*}
In \cite{LP2}, it was shown that for this function the following representation holds
\begin{equation}\label{fmM}
 f_{m,M}(h)=\int_m^M\Big(\frac{1}{s+1}h-h(s\1+h)^{-1}\Big)\,ds,
\end{equation}
where the integral is Bochner's integral of a function with values in the Banach space $L^1(\M,\tau)$. (This representation follows from the formula
\begin{align*}
 \int_m^M\Big(\frac{t}{s+1}-\frac{t}{s+t}\Big)\,ds&=t\log(m+t)-t\log(m+1)\\
 &+t\log(M+1)-t\log(M+t),
\end{align*}
and the estimate
\begin{align*}
 \Big\|\frac{1}{s+1}h-h(s\1+h)^{-1}\Big\|_1&=\frac{1}{s+1}\|h(h-\1)(s\1+h)^{-1}\|_1\\
 &\leq\frac{1}{s+1}\|h\|_1\|(h-1)(s\1+h)^{-1}\|_\infty\\
 &=\frac{1}{s+1}\max\Big\{\frac{1}{s},1\Big\}\|h\|_1,
\end{align*}
yielding Bochner's integrability of the function under the integral sign in the formula \eqref{fmM}.) The idea behind the function $f_{m,M}$ is that $f_{m,M}(h)$ should approximate $f(h)$ as $m\to0$ and $M\to\infty$. It was shown in \cite[Lemma 6]{LP2} that indeed if $h\log h\in L^1(\M,\tau)$, then
\[
 \lim_{\substack{m\to0\\M\to\infty}}f_{m,M}(h)=f(h) \quad \text{in }\|\cdot\|_1\text{-norm}.
\]
We shall show that a similar result is possible for the entropy $H(h)$ whenever it is well-defined. A slightly simplified version of the approximation of $H(h)$ by $\tau(f_{m,M}(h))$ is as follows.
\begin{lemma}\label{L}
Let $h\in\E$. Then
\[
 \lim_{M\to\infty}\tau(f_{1/M,M}(h))=\tau(f(h)).
\]
\end{lemma}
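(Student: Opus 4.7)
My plan is to reduce the operator identity to a scalar monotone-convergence argument against the spectral measure of $h$. Writing $h=\int_0^\infty t\,e(dt)$ and applying the functional calculus, the four terms in the definition of $f_{1/M,M}(h)$ collapse to a single bounded continuous function of $h$; a direct algebraic simplification (combining the four logarithms) yields
\[
f_{1/M,M}(h)=g_M(h),\qquad g_M(t)=t\log\frac{1+Mt}{M+t}.
\]
Since $f_{1/M,M}(h)\in L^1(\M,\tau)$ by \eqref{fmM}, its trace evaluates via the spectral theorem to $\int_0^\infty g_M(t)\,\tau(e(dt))$.

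The crucial observation is that the quotient $\frac{1+Mt}{M+t}$ tends to $t$ \emph{monotonically} in $M$: its $M$-derivative equals $(t^2-1)/(M+t)^2$, which is negative on $[0,1)$ and positive on $(1,\infty)$. Consequently, for $M\geq 1$, the scalar function $g_M$ is nonpositive on $[0,1]$ and decreases there to $f(t)=t\log t$, while $g_M$ is nonnegative on $[1,\infty)$ and increases there to $f(t)$. Splitting the spectral integral at $t=1$ therefore matches exactly the signs required by the monotone convergence theorem.

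Applied on each region separately, $\int_0^1(-g_M)\,d(\tau\circ e)\uparrow\int_0^1(-f)\,d(\tau\circ e)$ and $\int_1^\infty g_M\,d(\tau\circ e)\uparrow\int_1^\infty f\,d(\tau\circ e)$, each limit taking values in $[0,\infty]$. The hypothesis $h\in\E$ is precisely the statement that $\tau(f(h))=\int_0^\infty f\,d(\tau\circ e)$ exists in $[-\infty,\infty]$, which is equivalent to at least one of these two limits being finite. The two limits may therefore be legitimately combined, yielding $\tau(f_{1/M,M}(h))\to\tau(f(h))$.

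The main obstacle I expect is not the monotone convergence step itself but the preliminary algebraic collapse of the four logarithmic terms to the compact form $t\log\frac{1+Mt}{M+t}$ together with the monotonicity in $M$; these are what isolate the natural split at $t=1$ and produce the correct signs, so that no domination hypothesis (which would fail when $H(h)=\pm\infty$) is ever needed.
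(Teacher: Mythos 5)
Your proof is correct and follows essentially the same route as the paper: the same collapse of $f_{1/M,M}(h)$ to the scalar function $t\log\frac{Mt+1}{M+t}$ evaluated against the spectral measure, the same split of the integral at $t=1$, and the same exploitation of the sign of $\log\frac{Mt+1}{M+t}$ on either side of $t=1$. The only real difference is that you establish monotonicity in $M$ and apply monotone convergence on each half, which handles the finite and infinite cases uniformly, whereas the paper uses the two-sided bounds $t\leq\frac{Mt+1}{M+t}\leq1$ on $[0,1]$ (resp.\ $1\leq\frac{Mt+1}{M+t}\leq t$ on $[1,\infty)$) together with Fatou's lemma and dispatches the finite case separately via \cite[Lemma 6]{LP2}.
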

\begin{proof}
Certainly, we may assume that $\tau(f(h))=\pm\infty$. Let
\[
 h=\int_0^\infty t\,e(dt)
\]
be the spectral representation of $h$. Then
\begin{align*}
 f_{m,M}(h)&=h\log(m\1+h)-h\log(m+1)+h\log(M+1)+\\
 &-h\log(M\1+h)=\int_0^\infty t\log\frac{(M+1)(m+t)}{(m+1)(M+t)}\,e(dt),
\end{align*}
and for $m=\frac{1}{M}$ we have
\[
 f_{m,M}(h)=\int_0^\infty t\log\frac{Mt+1}{M+t}\,e(dt).
\]
Furthermore,
\begin{align*}
 \tau(h\log h)&=\int_0^\infty t\log t\,\tau(e(dt))\\
 &=\int_0^1t\log t\,\tau(e(dt))+\int_1^\infty t\log t\,\tau(e(dt)).
\end{align*}
Assume first that $\tau(f(h))=+\infty$. Then
\[
 -\infty<\int_0^1t\log t\,\tau(e(dt))\leq0 \quad \text{and} \quad \int_1^\infty t\log t\,\tau(e(dt))=+\infty.
\]
We have
\begin{align*}
 \tau(f_{m,M}(h))&=\int_0^\infty t\log\frac{Mt+1}{M+t}\,\tau(e(dt))\\
 &=\int_0^1t\log\frac{Mt+1}{M+t}\,\tau(e(dt))+\int_1^\infty t\log\frac{Mt+1}{M+t}\,\tau(e(dt)).
\end{align*}
For $0\leq t\leq1$, we have
\[
 t\leq\frac{Mt+1}{M+t}\leq1,
\]
thus
\[
 0\geq\int_0^1t\log\frac{Mt+1}{M+t}\,\tau(e(dt))\geq\int_0^1t\log t\,\tau(e(dt))>-\infty.
\]
Hence
\[
 \tau(f_{m,M}(h))\geq\int_0^1t\log t\,\tau(e(dt))+\int_1^\infty t\log\frac{Mt+1}{M+t}\,\tau(e(dt)).
\]
For the second integral we have, using Fatou's Lemma,
\begin{align*}
 \liminf_{M\to\infty}\int_1^\infty t\log\frac{Mt+1}{M+t}\,\tau(e(dt))&\geq\int_1^\infty\liminf_{M\to\infty}t\log\frac{Mt+1}{M+t}\,\tau(e(dt))\\
 &=\int_1^\infty t\log t\,\tau(e(dt))=+\infty,
\end{align*}
showing that
\[
 \lim_{M\to\infty}\int_1^\infty t\log\frac{Mt+1}{M+t}\,\tau(e(dt))=+\infty.
\]
Consequently,
\begin{align*}
 \liminf_{M\to\infty}\tau(f_{m,M}(h))&\geq\int_0^1t\log t\,\tau(e(dt))\\
 &+\lim_{M\to\infty}\int_1^\infty t\log\frac{Mt+1}{M+t}\,\tau(e(dt))=+\infty,
\end{align*}
i.e.
\[
 \lim_{M\to\infty}\tau(f_{m,M}(h))=+\infty=\tau(f(h)).
\]

Now let $\tau(f(h))=-\infty$.Then
\[
 \int_0^1t\log t\,\tau(e(dt))=-\infty \quad \text{and} \quad 0\leq\int_1^\infty t\log t\,\tau(e(dt))<+\infty.
\]
For $t\geq1$, we have
\[
 1\leq\frac{Mt+1}{M+t}\leq t,
\]
thus
\[
 0\leq\int_1^\infty t\log\frac{Mt+1}{M+t}\,\tau(e(dt))\leq\int_1^\infty t\log t\,\tau(e(dt))<+\infty.
\]
Hence
\begin{align*}
 \tau(f_{m,M}(h))&=\int_0^1t\log\frac{Mt+1}{M+t}\,\tau(e(dt))+\int_1^\infty t\log\frac{Mt+1}{M+t}\,\tau(e(dt))\\
 &\leq\int_0^1t\log\frac{Mt+1}{M+t}\,\tau(e(dt))+\int_1^\infty t\log t\,\tau(e(dt)).
\end{align*}
For the first integral in the formula above we have, again using Fatou's Lemma,
\begin{align*}
 -&\limsup_{M\to\infty}\int_0^1t\log\frac{Mt+1}{M+t}\,\tau(e(dt))\\
 =&\liminf_{M\to\infty}\int_0^1\Big(-t\log\frac{Mt+1}{M+t}\Big)\,\tau(e(dt))\\
 \geq&\int_0^1\liminf_{M\to\infty}\Big(-t\log\frac{Mt+1}{M+t}\Big)\,\tau(e(dt))\\
 =&\int_0^1(-t\log t)\,\tau(e(dt))=+\infty,
\end{align*}
showing that
\[
 \lim_{M\to\infty}\int_0^1t\log\frac{Mt+1}{M+t}\,\tau(e(dt))=-\infty.
\]
Consequently,
\begin{align*}
 \limsup_{M\to\infty}\tau(f_{m,M}(h))&\leq\int_0^1t\log t\,\tau(e(dt))\\
 &+\lim_{M\to\infty}\int_1^\infty t\log\frac{Mt+1}{M+t}\,\tau(e(dt))=-\infty,
\end{align*}
i.e.
\[
 \lim_{M\to\infty}\tau(f_{m,M}(h))=-\infty=\tau(f(h)). \qedhere
\]
\end{proof}
Our next result concerns the continuity of the function $f_{m,M}$.
\begin{proposition}\label{P}
The function $f_{m,M}$ is uniformly continuous in $\|\cdot\|_1$-norm on $L^1(\M,\tau)^+$.
\end{proposition}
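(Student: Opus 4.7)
The plan is to use the Bochner integral representation \eqref{fmM} to establish the stronger statement that $f_{m,M}$ is Lipschitz on $L^1(\M,\tau)^+$ with a constant depending only on $m$ and $M$; uniform continuity then follows immediately. For $h,k\in L^1(\M,\tau)^+$ I would first write
\[
 f_{m,M}(h)-f_{m,M}(k)=\int_m^M\Big[\frac{1}{s+1}(h-k)-\bigl(h(s\1+h)^{-1}-k(s\1+k)^{-1}\bigr)\Big]\,ds
\]
and push the $\|\cdot\|_1$-norm inside the Bochner integral. The linear summand contributes exactly $\log\frac{M+1}{m+1}\|h-k\|_1$ after integration, so only the nonlinear part requires work.

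For the nonlinear part I would invoke the identity $h(s\1+h)^{-1}=\1-s(s\1+h)^{-1}$, valid by functional calculus for the positive operator $h$ affiliated with $\M$, together with its analogue for $k$. Subtracting and applying the standard resolvent identity yields
\[
 h(s\1+h)^{-1}-k(s\1+k)^{-1}=s\bigl[(s\1+k)^{-1}-(s\1+h)^{-1}\bigr]=s(s\1+h)^{-1}(h-k)(s\1+k)^{-1}.
\]
Since $h,k\geq0$, both resolvents lie in $\M$ with $\|\cdot\|_\infty$-norm at most $1/s$, so the noncommutative Hölder inequality gives
\[
 \bigl\|h(s\1+h)^{-1}-k(s\1+k)^{-1}\bigr\|_1\leq s\cdot\frac{1}{s}\cdot\|h-k\|_1\cdot\frac{1}{s}=\frac{1}{s}\|h-k\|_1.
\]

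Integrating this bound on $[m,M]$ and combining with the linear contribution produces
\[
 \|f_{m,M}(h)-f_{m,M}(k)\|_1\leq\Big(\log\frac{M+1}{m+1}+\log\frac{M}{m}\Big)\|h-k\|_1,
\]
which is the desired Lipschitz estimate and hence yields uniform continuity. The only subtlety is the handling of resolvents of the (possibly unbounded) operators $h,k$, but since they are positive and affiliated with $\M$ the functional calculus secures $(s\1+h)^{-1}\in\M$ with the sharp bound $1/s$, and the resolvent identity transfers verbatim to this setting. Once this is granted the argument collapses to a single operator-norm estimate under the integral sign followed by a routine logarithmic integration, and I do not anticipate any serious obstacle beyond writing these manipulations out carefully.
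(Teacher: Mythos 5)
Your proposal is correct and follows essentially the same route as the paper: the Bochner representation \eqref{fmM}, the identity $h(s\1+h)^{-1}=\1-s(s\1+h)^{-1}$, the resolvent identity, the operator-norm bound $\|(s\1+h)^{-1}\|_\infty\leq 1/s$ with H\"older, and the integration of $\frac{1}{s+1}+\frac{1}{s}$ over $[m,M]$. The paper's final bound $\|h'-h''\|_1\int_m^M\bigl(\frac{1}{s+1}+\frac{1}{s}\bigr)\,ds$ is exactly your Lipschitz constant written out, so the two arguments coincide.
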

\begin{proof}
Let $h',h''\in L^1(\M,\tau)^+$ be arbitrary. We have
\begin{align*}
 &(s\1+h')^{-1}-(s\1+h'')^{-1}\\
 =&(s\1+h')^{-1}((s\1+h'')-(s\1+h'))(s\1+h'')^{-1}\\
 =&(s\1+h')^{-1}(h''-h')(s\1+h'')^{-1},
\end{align*}
so we obtain
\begin{align*}
 &f_{m,M}(h')-f_{m,M}(h'')=\int_m^M\Big(\frac{1}{s+1}h'-h'(s\1+h')^{-1}\Big)\,ds\\
  &\phantom{f_{m,M}(h')-f_{m,M}(h'')}-\int_m^M\Big(\frac{1}{s+1}h''-h''(s\1+h'')^{-1}\Big)\,ds\\
  &\phantom{f_{m,M}(h')-f_{m,M}(h'')}=\int_m^M\Big(\frac{1}{s+1}h'-\1+s(s\1+h')^{-1}\Big)\,ds\\
  &\phantom{f_{m,M}(h')-f_{m,M}(h'')}-\int_m^M\Big(\frac{1}{s+1}h''-\1+s(s\1+h'')^{-1}\Big)\,ds\\
 =&\int_m^M\Big(\frac{1}{s+1}(h'-h'')+s\big((s\1+h')^{-1}-(s\1+h'')^{-1}\big)\Big)\,ds\\
 =&\int_m^M\Big(\frac{1}{s+1}(h'-h'')+s\big((s\1+h')^{-1}(h''-h')(s\1+h'')^{-1}\big)\Big)\,ds.
\end{align*}
The inequality
\begin{align*}
 &\big\|(s\1+h')^{-1}(h''-h')(s\1+h'')^{-1}\big\|_1\\
 \leq&\big\|(s\1+h')^{-1}\|_\infty\|h''-h'\|_1\|(s\1+h'')^{-1}\big\|_\infty\leq\frac{1}{s^2}\|h'-h''\|_1
\end{align*}
yields
\begin{align*}
 &\|f_{m,M}(h')-f_{m,M}(h'')\|_1\\
 \leq&\int_m^M\Big\|\frac{1}{s+1}(h'-h'')+s\big((s\1+h')^{-1}(h''-h')(s\1+h'')^{-1}\big)\Big\|_1\,ds\\
 \leq&\int_m^M\Big(\frac{1}{s+1}\|h'-h''\|_1+s\big\|(s\1+h')^{-1}(h''-h')(s\1+h'')^{-1}\big\|_1\Big)\,ds\\
 \leq&\int_m^M\Big(\frac{1}{s+1}+\frac{1}{s}\Big)\|h'-h''\|_1\,ds=\|h'-h''\|_1\int_m^M\Big(\frac{1}{s+1}+\frac{1}{s}\Big)\,ds,
\end{align*}
and the conclusion follows.
\end{proof}
As a corollary to Lemma~\ref{L} and Proposition~\ref{P} we obtain
\begin{theorem}
The function $H$ is of first Baire's class on $\E$.
\end{theorem}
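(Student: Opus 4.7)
The plan is to exhibit $H$ restricted to $\E$ as a pointwise limit of continuous real-valued functions on $\E$ equipped with the relative $\|\cdot\|_1$-topology inherited from $L^1(\M,\tau)^+$. For each positive integer $M$, set $H_M(h):=\tau(f_{1/M,M}(h))$ for $h\in L^1(\M,\tau)^+$.

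First I would verify that each $H_M$ is continuous on $L^1(\M,\tau)^+$. Proposition~\ref{P} gives that the map $h\mapsto f_{1/M,M}(h)$ is (uniformly) continuous from $L^1(\M,\tau)^+$ into $L^1(\M,\tau)$, both equipped with the $\|\cdot\|_1$-norm. The trace $\tau$, in its extension to $L^1(\M,\tau)$, is a bounded linear functional on that Banach space (indeed $|\tau(x)|\leq\|x\|_1$), hence norm-continuous. Composing, $H_M=\tau\circ f_{1/M,M}$ is continuous on $L^1(\M,\tau)^+$, and so is its restriction to $\E$ in the relative topology.

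Second, Lemma~\ref{L} gives that for every $h\in\E$, $H_M(h)=\tau(f_{1/M,M}(h))\to\tau(f(h))=H(h)$ as $M\to\infty$ (in the extended reals, allowing $\pm\infty$). Thus $H|_\E$ is the pointwise limit of the sequence $(H_M|_\E)$ of continuous real-valued functions, and by definition $H$ is of the first Baire class on $\E$.

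There is no substantial obstacle left after Lemma~\ref{L} and Proposition~\ref{P} have been established; the only point deserving remark is that the conclusion covers $h$ with $H(h)=\pm\infty$, which is consistent with the standard usage in which an extended-real-valued function obtained as a pointwise limit of continuous finite-valued functions is called of first Baire class.
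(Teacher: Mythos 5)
Your proposal is correct and follows exactly the paper's own argument: the paper likewise defines the approximants $\tau\circ f_{1/M,M}$, invokes Proposition~\ref{P} for their $\|\cdot\|_1$-continuity and Lemma~\ref{L} for the pointwise convergence to $H$ on $\E$. Your added remark about the extended-real-valued limit is a reasonable clarification that the paper leaves implicit.
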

\begin{proof}
The proof follows from the relation
\[
 H(h)=\lim_{M\to\infty}\tau(f_{1/M,M}(h)),
\]
and the continuity of the functions $f_{1/M,M}$, $M=2,3,\dots$, in \linebreak $\|\cdot\|_1$-norm yielding the continuity of the functions $\tau\circ f_{1/M,M}$.
\end{proof}
From the properties of the functions of Baire's first class and the fact that for $\BH$ the set $\E=$ nonnegative trace-class operators is complete, we get
\begin{corollary}
The set of (generalised) continuity points of von Neumann's entropy is dense in the set of nonnegative trace-class operators on $\Ha$.
\end{corollary}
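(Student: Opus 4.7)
The plan is to invoke the classical theorem from descriptive set theory that every function of the first Baire class on a complete metric (or, more generally, a Baire) space has a dense $G_\delta$ set of points of continuity. The preceding theorem establishes that $H$ is of the first Baire class on $\E$, so I would begin by observing that when $\M=\BH$ and $\tau=\tr$, the set $\E$ coincides with $L^1(\BH,\tr)^+$, the cone of nonnegative trace-class operators. This cone is closed in $(L^1(\BH,\tr),\|\cdot\|_1)$, hence is itself a complete metric space.

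Next, I would address the one subtle point, namely that $H$ takes values in the extended reals $[-\infty,+\infty)$. To reduce to the classical (real-valued) statement, compose with a homeomorphism $\f\colon[-\infty,+\infty]\to[-1,1]$ (for example $\f(t)=\tfrac{2}{\pi}\arctan t$ with $\f(\pm\infty)=\pm1$). By Lemma~\ref{L} the sequence $\tau(f_{1/M,M}(h))$ converges to $H(h)$ in the extended-real sense for every $h\in\E$, and Proposition~\ref{P} together with the continuity of $\tau$ on $L^1(\M,\tau)$ shows that each $\tau\circ f_{1/M,M}$ is $\|\cdot\|_1$-continuous. Therefore $\f\circ\tau\circ f_{1/M,M}$ are bounded continuous functions on the complete metric space $L^1(\BH,\tr)^+$, and they converge pointwise to $\f\circ H$, so $\f\circ H$ is a bounded real-valued function of the first Baire class.

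Applying the classical theorem of Baire to $\f\circ H$, its set of continuity points forms a dense $G_\delta$ subset of $L^1(\BH,\tr)^+$. Since $\f$ is a homeomorphism of the extended real line, a point is a continuity point of $\f\circ H$ if and only if it is a generalised continuity point of $H$, i.e.\ $H(h_n)\to H(h_0)$ in $[-\infty,+\infty]$ whenever $\|h_n-h_0\|_1\to 0$. Because $S=-H$, the same set is the (generalised) continuity set of the von Neumann entropy $S$, giving the conclusion.

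The main (really the only) obstacle is handling the $-\infty$ values of $H$: the cited Baire-theoretic result is usually stated for real-valued functions, so one must argue that passing through the homeomorphism $\f$ legitimately transfers both the first-Baire-class property from $H$ to $\f\circ H$ and the density of continuity points back to the extended-real-valued $H$. Everything else is a direct citation of the theorem proved above together with the completeness of the nonnegative trace-class cone.
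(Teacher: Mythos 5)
Your proposal is correct and follows exactly the paper's route: the paper likewise derives the corollary from the first-Baire-class theorem together with the completeness of the cone of nonnegative trace-class operators, invoking the classical fact that such functions have a dense set of (generalised) continuity points. Your explicit reduction of the extended-real-valued $H$ to a bounded real-valued function via a homeomorphism of $[-\infty,+\infty]$ onto $[-1,1]$ is a careful filling-in of a detail the paper leaves implicit in its notion of generalised continuity point, not a different argument.
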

(By a generalised continuity point of an extended real-valued function g is meant a point $t_0$ for which the equality $\displaystyle{\lim_{t\to t_0}g(t)=g(t_0)}$ holds, where we admit the possibilities $g(t)=\pm\infty$ and $g(t_0)=\pm\infty$.) The same result holds also for Segal's entropy and $\M$ finite, however in this case we have more as follows from point (3) of the next theorem which gives some conditions for semicontinuity of Segal's entropy. In particular, point (1) of the theorem generalises considerably Theorem 9 in \cite{LP1}.
\begin{theorem}\label{u-lscont}
Let $\varepsilon>0$ and $c>0$ be arbitrary.
\begin{enumerate}
 \item Segal's entropy is upper-semicontinuous on the set\\ $\{h\in\E:\tau(h^{1+\varepsilon})\leq c\}$.
 \item Segal's entropy is lower-semicontinuous on the set\\ $\{h\in\E:\tau(h^{1-\varepsilon})\leq c\}$.
 \item For $\M$ finite, Segal's entropy is lower-semicontinuous on the whole of $L^1(\M,\tau)^+$.
\end{enumerate}
\end{theorem}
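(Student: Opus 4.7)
The plan is to refine Lemma~\ref{L} into a \emph{uniform} one-sided approximation of $H$ by $\tau\circ f_{1/M,M}$ on each of the three sets; Proposition~\ref{P} then turns each such approximation into the corresponding semicontinuity.

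The spectral computation in the proof of Lemma~\ref{L} yields the identity
\[
 \tau(f_{1/M,M}(h))-H(h)=\int_0^\infty t\log\frac{Mt+1}{t(M+t)}\,\tau(e(dt)).
\]
The integrand is $\geq0$ on $(0,1]$ and $\leq0$ on $[1,\infty)$, so the integral decouples into two pieces of opposite sign, giving the one-sided bounds
\[
 H(h)-\tau(f_{1/M,M}(h))\leq\int_1^\infty t\log\frac{t(M+t)}{Mt+1}\,\tau(e(dt))=:N_M(h),
\]
\[
 \tau(f_{1/M,M}(h))-H(h)\leq\int_0^1 t\log\frac{Mt+1}{Mt+t^2}\,\tau(e(dt))=:P_M(h),
\]
both with nonnegative right-hand sides. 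The key analytic tool is the elementary estimate $\log(1+x)\leq x^\al/\al$, valid for $x\geq0$ and $\al\in(0,1]$ (verified by checking the derivative of $x^\al/\al-\log(1+x)$). On $[1,\infty)$ one has $\frac{t(M+t)}{Mt+1}=1+\frac{t^2-1}{Mt+1}\leq 1+t/M$, so applying the inequality with $\al=\varepsilon$ gives the pointwise bound $t^{1+\varepsilon}/(\varepsilon M^\varepsilon)$ for the integrand of $N_M$, whence $N_M(h)\leq\tau(h^{1+\varepsilon})/(\varepsilon M^\varepsilon)\leq c/(\varepsilon M^\varepsilon)$ under the hypothesis of~(1). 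Symmetrically, on $(0,1]$ one has $\frac{Mt+1}{Mt+t^2}\leq 1+1/(Mt)$, so the same inequality bounds the integrand of $P_M$ by $t^{1-\varepsilon}/(\varepsilon M^\varepsilon)$, giving $P_M(h)\leq c/(\varepsilon M^\varepsilon)$ under the hypothesis of~(2). For~(3), finiteness of $\M$ lets me afford the cruder bound $t\log(1+1/(Mt))\leq 1/M$, yielding $P_M(h)\leq\tau(\1)/M$ uniformly on $L^1(\M,\tau)^+$.

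With these uniform one-sided error bounds, the semicontinuity deductions are routine. For~(1), if $h_n\to h$ in $\|\cdot\|_1$ within $\{\tau(h^{1+\varepsilon})\leq c\}$, the inequality $H(h_n)\leq\tau(f_{1/M,M}(h_n))+c/(\varepsilon M^\varepsilon)$ together with continuity of $\tau\circ f_{1/M,M}$ (Proposition~\ref{P}) gives $\limsup_n H(h_n)\leq\tau(f_{1/M,M}(h))+c/(\varepsilon M^\varepsilon)$; letting $M\to\infty$ and invoking Lemma~\ref{L} concludes $\limsup_n H(h_n)\leq H(h)$. Cases~(2) and~(3) are the $\liminf$ analogues, with $\tau(\1)/M$ replacing $c/(\varepsilon M^\varepsilon)$ in~(3). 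The main obstacle I anticipate is pinpointing the correct logarithm inequality: the standard $\log(1+x)\leq x$ produces the wrong pointwise moments (a $t^2$ in~(1), no decay at $t=0$ in~(2)), whereas the fractional-power variant $\log(1+x)\leq x^\al/\al$ is precisely what pairs with the hypotheses $\tau(h^{1\pm\varepsilon})\leq c$. A minor ancillary point is that these hypotheses force $H(h)$ to lie in $[-\infty,+\infty)$ on the set in~(1) and in $(-\infty,+\infty]$ on the set in~(2), so that $H$ is well defined (not of the indeterminate form $+\infty-\infty$) there.
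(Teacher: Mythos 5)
Your proof is correct and follows essentially the same route as the paper's: a one-sided bound, uniform on the constrained set, for the error $H(h)-\tau(f_{1/M,M}(h))$ obtained from the spectral identity and the estimate $\log(1+u)\lesssim u^{\varepsilon}$, then combined with Proposition~\ref{P} and Lemma~\ref{L}; your packaging via the single signed integrand $t\log\frac{Mt+1}{t(M+t)}$ and the global inequality $\log(1+x)\leq x^{\al}/\al$ is marginally cleaner than the paper's split at $t=Mr(\varepsilon)$. The one point to patch is that $\log(1+x)\leq x^{\varepsilon}/\varepsilon$ requires $\varepsilon\leq 1$, so in part (1) you should apply it with $\min(\varepsilon,1)$ and use $t^{1+\min(\varepsilon,1)}\leq t^{1+\varepsilon}$ on $[1,\infty)$ --- a blemish the paper's own estimate $t^{1-\varepsilon}\leq[Mr(\varepsilon)]^{1-\varepsilon}$ shares.
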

\begin{proof}
(1) Let $h_0\in\{h\in\E:\tau(h^{1+\varepsilon})\leq c\}$, and let $(h_n)$ be an arbitrary sequence in $\{h\in\E:\tau(h^{1+\varepsilon})\leq c\}$ converging to $h_0$ in $\|\cdot\|_1$-norm. We have
\begin{equation}\label{fhn}
 f(h_n)=f(h_n)-f_{m,M}(h_n)+f_{m,M}(h_n)-f_{m,M}(h_0)+f_{m,M}(h_0).
\end{equation}
Furthermore the following estimate holds
\begin{align*}
 &f(h_n)-f_{m,M}(h_n)=h_n\log h_n-h_n\log(m\1+h_n)\\
 +&h_n\log(m+1)-h_n\log(M+1)+h_n\log(M\1+h_n)\\
 \leq&h_n\log(m+1)-h_n\log(M+1)+h_n\log(M\1+h_n),
\end{align*}
since
\[
 h_n\log h_n-h_n\log(m\1+h_n)\leq0.
\]
Let
\[
 h_n=\int_0^\infty t\,e_n(dt)
\]
be the spectral representation of $h_n$. We have
\begin{align*}
 &\tau(h_n\log(M\1+h_n)-h_n\log(M+1))=\int_0^\infty t\log\frac{M+t}{M+1}\,\tau(e_n(dt))\\
 =&\int_0^1t\log\frac{M+t}{M+1}\,\tau(e_n(dt))+\int_1^\infty t\log\frac{M+t}{M+1}\,\tau(e_n(dt)\\
 \leq&\int_1^\infty t\log\frac{M+t}{M+1}\,\tau(e_n(dt))\leq\int_1^\infty t\log\Big(1+\frac{t}{M}\Big)\,\tau(e_n(dt)).
\end{align*}
For $\varepsilon>0$ there is $r=r(\varepsilon)$ such that for $u>r(\varepsilon)$ we have
\[
 \log(1+u)\leq u^\varepsilon.
\]
Hence, for $\frac{t}{M}>r(\varepsilon)$, i.e. $t>Mr(\varepsilon)$, we have the estimate
\[
 t\log\Big(1+\frac{t}{M}\Big)\leq\frac{t^{1+\varepsilon}}{M^\varepsilon}.
\]
For $t\leq Mr(\varepsilon)$, we have
\begin{align*}
 t\log\Big(1+\frac{t}{M}\Big)\leq\frac{t^2}{M}=\frac{t^{1-\varepsilon}\,t^{1+\varepsilon}}{M}
 \leq\frac{[Mr(\varepsilon)]^{1-\varepsilon}\,t^{1+\varepsilon}}{M}
 =\frac{r(\varepsilon)^{1-\varepsilon}\,t^{1+\varepsilon}}{M^\varepsilon}.
\end{align*}
Now
\begin{align*}
 \int_1^\infty t\log\Big(1+\frac{t}{M}\Big)\,\tau(e_n(dt))&=\int_1^{Mr(\varepsilon)}t\log\big(1+\frac{t}{M}\Big)\,\tau(e_n(dt))\\
 &+\int_{Mr(\varepsilon)}^\infty t\log\Big(1+\frac{t}{M}\Big)\,\tau(e_n(dt)),
\end{align*}
and the two integrals above are estimated as follows
\begin{align*}
 \int_1^{Mr(\varepsilon)}t\log\big(1+\frac{t}{M}\Big)\,\tau(e_n(dt))&\leq\frac{r(\varepsilon)^{1-\varepsilon}}{M^\varepsilon}\int_1^{Mr(\varepsilon)}t^{1+\varepsilon}\,\tau(e_n(dt))\\
 &\leq\frac{r(\varepsilon)^{1-\varepsilon}}{M^\varepsilon}\tau\big(h_n^{1+\varepsilon}\big),
\end{align*}
and
\begin{align*}
 \int_{Mr(\varepsilon)}^\infty t\log\Big(1+\frac{t}{M}\Big)\,\tau(e_n(dt))&\leq\frac{1}{M^\varepsilon}\int_{Mr(\varepsilon)}^\infty t^{1+\varepsilon}\,\tau(e_n(dt))\\
 &\leq\frac{1}{M^\varepsilon}\tau\big(h_n^{1+\varepsilon}\big).
\end{align*}
Consequently,
\begin{align*}
 \int_1^\infty t\log\Big(1+\frac{t}{M}\Big)\,\tau(e_n(dt))&\leq\frac{r(\varepsilon)^{1-\varepsilon}}{M^\varepsilon}\tau\big(h_n^{1+\varepsilon}\big)+\frac{1}{M^\varepsilon}
 \tau\big(h_n^{1+\varepsilon}\big)\\
 &=\frac{r(\varepsilon)^{1-\varepsilon}+1}{M^\varepsilon}\tau\big(h_n^{1+\varepsilon}\big),
\end{align*}
which gives the following estimate for the first term in the equality~\eqref{fhn}
\[
 \tau(f(h_n)-f_{m,M}(h_n))\leq\log(m+1)\tau(h_n)+\frac{r(\varepsilon)^{1-\varepsilon}+1}{M^\varepsilon}\tau\big(h_n^{1+\varepsilon}\big).
\]
As for the second term, Proposition \ref{P} yields
\[
 \tau(f_{m,M}(h_n)-f_{m,M}(h_0))\to0 \quad \text{as }n\to\infty.
\]
Taking into account the estimates above, we get
\begin{align*}
 &\tau(f(h_n))=\tau(f(h_n)-f_{m,M}(h_n))+\tau(f_{m,M}(h_n)-f_{m,M}(h_0))\\
 +&\tau(f_{m,M}(h_0))\leq\log(m+1)\tau(h_n)+\frac{r(\varepsilon)^{1-\varepsilon}+1}{M^\varepsilon}\tau\big(h_n^{1+\varepsilon}\big)\\
 +&\tau(f_{m,M}(h_n)-f_{m,M}(h_0))+\tau(f_{m,M}(h_0))\leq\log(m+1)\tau(h_n)\\
 +&c\frac{r(\varepsilon)^{1-\varepsilon}+1}{M^\varepsilon}+\tau(f_{m,M}(h_n)-f_{m,M}(h_0))+\tau(f_{m,M}(h_0)).
\end{align*}
Now passing to the limit with $n\to\infty$ in the inequality above yields
\begin{equation}\label{lsup}
 \begin{aligned}
  \limsup_{n\to\infty}\tau(f(h_n))&\leq\log(m+1)\tau(h_0)\\
  &+c\frac{r(\varepsilon)^{1-\varepsilon}+1}{M^\varepsilon}+\tau(f_{m,M}(h_0)).
 \end{aligned}
\end{equation}
Taking $m=\frac{1}{M}$ and again passing to the limit with $M\to\infty$ in the inequality above, we get on account of Lemma \ref{L}
\begin{align*}
 \limsup_{n\to\infty}H(h_n)&=\limsup_{n\to\infty}\tau(f(h_n))\\
 &\leq\lim_{M\to\infty}\tau(f_{m,M}(h_0))=\tau(f(h_0))=H(h_0),
\end{align*}
proving the upper-semicontinuity of $H$.

(2) The overall strategy of the proof is the same as in part (1), the only difference lies in estimates. Let $h_0\in\{h\in\E:\tau(h^{1-\varepsilon})\leq c\}$, and let $(h_n)$ be an arbitrary sequence in the set $\{h\in\E:\tau(h^{1-\varepsilon})\leq c\}$ converging to $h_0$ in $\|\cdot\|_1$-norm. For $h\in L^1(\M,\tau)^+$ with spectral representation
\[
 h=\int_0^\infty t\,e(dt),
\]
we have for each $a>0$
\begin{align*}
 \tau(h^{1-\varepsilon})&=\int_0^\infty t^{1-\varepsilon}\,\tau(e(dt))
 =\int_0^at^{1-\varepsilon}\,\tau(e(dt))+\int_a^\infty t^{1-\varepsilon}\,\tau(e(dt))\\
 &\geq\int_0^at^{1-\varepsilon}\,\tau(e(dt))+a^{1-\varepsilon}\tau(e([a,\infty))),
\end{align*}
which means that
\begin{equation}\label{c1}
 a^{1-\varepsilon}\tau(e([a,\infty)))\leq\tau(h^{1-\varepsilon})
\end{equation}
for every $a>0$. We have
\[
 h_n\log(m\1+h_n)-h_n\log h_n=\int_0^\infty t\log\Big(1+\frac{m}{t}\Big)\,e_n(dt).
\]
For $\frac{m}{t}\geq r(\varepsilon)$, i.e. $t\leq\frac{m}{r(\varepsilon)}$, we have
\[
 \log\Big(1+\frac{m}{t}\Big)\leq\frac{m^\varepsilon}{t^\varepsilon},
\]
thus
\begin{align*}
 &\int_0^{m/r(\varepsilon)}t\log\Big(1+\frac{m}{t}\Big)\,\tau(e_n(dt))\leq\int_0^{m/r(\varepsilon)}t\,\frac{m^\varepsilon}{t^\varepsilon}\,\tau(e_n(dt))\\
 =&m^\varepsilon\int_0^{m/r(\varepsilon)}t^{1-\varepsilon}\tau(e_n(dt))\leq m^\varepsilon\tau(h_n^{1-\varepsilon})\leq cm^\varepsilon.
\end{align*}
For $t\geq\frac{m}{r(\varepsilon)}$, we have, putting $a=\frac{m}{r(\varepsilon)}$ in the inequality \eqref{c1},
\begin{align*}
 &\int_{m/r(\varepsilon)}^\infty t\log\Big(1+\frac{m}{t}\Big)\,\tau(e_n(dt))\\
 \leq&\int_{m/r(\varepsilon)}^\infty t\,\frac{m}{t}\,\tau(e_n(dt))=m\tau\Big(e_n\Big(\Big[\frac{m}{r(\varepsilon)},\infty\Big)\Big)\Big)\\
 =&m^\varepsilon r(\varepsilon)^{1-\varepsilon}\Big(\frac{m}{r(\varepsilon)}\Big)^{1-\varepsilon}\tau\Big(e_n\Big(\Big[\frac{m}{r(\varepsilon)},\infty\Big)\Big)\Big)\\
 \leq&r(\varepsilon)^{1-\varepsilon}\tau(h_n^{1-\varepsilon})m^\varepsilon\leq cr(\varepsilon)^{1-\varepsilon}m^\varepsilon.
\end{align*}
Consequently,
\begin{align*}
 &\tau(h_n\log(m\1+h_n)-h_n\log h_n)=\int_0^\infty t\log\Big(1+\frac{m}{t}\Big)\,\tau(e_n(dt))\\
 =&\int_0^{m/r(\varepsilon)}t\log\Big(1+\frac{m}{t}\Big)\,\tau(e_n(dt))
 +\int_{m/r(\varepsilon)}^\infty t\log\Big(1+\frac{m}{t}\Big)\,\tau(e_n(dt))\\
 \leq&cm^\varepsilon+cr(\varepsilon)^{1-\varepsilon}m^\varepsilon=c\big(1+r(\varepsilon)^{1-\varepsilon}\big)m^\varepsilon,
\end{align*}
which yields
\begin{equation}\label{1}
 \tau(h_n\log h_n-h_n\log(m\1+h_n))\geq-c\big(1+r(\varepsilon)^{1-\varepsilon}\big)m^\varepsilon.
\end{equation}
Next we have, since $\frac{M+t}{M+1}\geq1$ for $t\geq1$,
\begin{align*}
 &\tau(-h_n\log(M+1)+h_n\log(M\1+h_n))=\int_0^\infty t\log\frac{M+t}{M+1}\,\tau(e_n(dt))\\
 =&\int_0^1t\log\frac{M+t}{M+1}\,\tau(e_n(dt))+\int_1^\infty t\log\frac{M+t}{M+1}\,\tau(e_n(dt))\\
 \geq&\int_0^1t\log\frac{M+t}{M+1}\,\tau(e_n(dt))\geq\int_0^1 t\log\frac{M}{M+1}\,\tau(e_n(dt))\\
 =&\log\Big(1-\frac{1}{M+1}\Big)\int_0^1t\,\tau(e_n(dt))\geq\log\Big(1-\frac{1}{M+1}\Big)\tau(h_n).
\end{align*}
For $0\leq u\leq\frac{1}{2}$ we have
\[
 \log(1-u)\geq-(2\log2)u,
\]
thus for $M\geq1$, we get
\[
 \log\Big(1-\frac{1}{M+1}\Big)\geq-\frac{2\log2}{M+1}.
\]
This yields
\begin{equation}\label{2}
 \tau(-h_n\log(M+1)+h_n\log(M\1+h_n))\geq-\frac{2\log2}{M+1}\tau(h_n).
\end{equation}
Putting together the estimates \eqref{1} and \eqref{2} above, we get
\begin{align*}
 &\tau(f(h_n)-f_{m,M}(h_n))=\tau(h_n\log h_n-h_n\log(m\1+h_n))\\
 +&\log(m+1)\tau(h_n)+\tau(-h_n\log(M+1)+h_n\log(M\1+h_n))\\
 \geq&-c(1+r(\varepsilon)^{1-\varepsilon})m^\varepsilon+\log(m+1)\tau(h_n)-\frac{2\log2}{M+1}\tau(h_n).
\end{align*}
Now we have
\begin{align*}
 &\tau(f(h_n))=\tau(f(h_n)-f_{m,M}(h_n))+\tau(f_{m,M}(h_n)-f_{m,M}(h_0))\\
 +&\tau(f_{m,M}(h_0))\geq-c(1+r(\varepsilon)^{1-\varepsilon})m^\varepsilon+\log(m+1)\tau(h_n)+\\
 -&\frac{2\log2}{M+1}\tau(h_n)+\tau(f_{m,M}(h_n)-f_{m,M}(h_0))+\tau(f_{m,M}(h_0)),
\end{align*}
and since
\[
 \tau(f_{m,M}(h_n)-f_{m,M}(h_0))\to0 \quad \text{as }n\to\infty,
\]
we obtain, passing to the limit with $n\to\infty$ in the inequality above,
\begin{equation}\label{linf}
 \begin{aligned}
  \liminf_{n\to\infty}\tau(f(h_n))&\geq-c(1+r(\varepsilon)^{1-\varepsilon})m^\varepsilon+\log(m+1)\tau(h_0)+\\
  &-\frac{2\log2}{M+1}\tau(h_0)+\tau(f_{m,M}(h_0)).
 \end{aligned}
\end{equation}
Now taking $m=\frac{1}{M}$ and again passing to the limit with $M\to\infty$ in the inequality above, we get on account of Lemma \ref{L}
\begin{align*}
 \liminf_{n\to\infty}H(h_n)&=\liminf_{n\to\infty}\tau(f(h_n))\\
 &\geq\lim_{M\to\infty}\tau(f_{m,M}(h_0))=\tau(f(h_0))=H(h_0),
\end{align*}
proving the lower-semicontinuity of $H$.

(3) For every $h\in L^1(\M,\tau)^+$ the entropy of $h$ exists. Now the only difference from the estimates in part (2) is the following estimate
\begin{align*}
 \tau(h_n\log(m\1+h_n)-h_n\log h_n)&=\int_0^\infty t\log\Big(1+\frac{m}{t}\Big)\,\tau(e_n(dt))\\
 &\leq\int_0^\infty t\,\frac{m}{t}\tau(e_n(dt))=m,
\end{align*}
which yields the following inequality analogous to the inequality \eqref{linf}
\begin{equation}\tag{8'}
 \begin{aligned}
  \liminf_{n\to\infty}\tau(f(h_n))&\geq-m+\log(m+1)\tau(h_0)+\\
  &-\frac{2\log2}{M+1}\tau(h_0)+\tau(f_{m,M}(h_0)).
 \end{aligned}
\end{equation}
The rest of the proof is as in point (2).
\end{proof}
From the theorem above, we obtain a number of corollaries of which the first one is a well-known property of von Neumann's entropy, the second provides a sufficient condition for its continuity, and the third is a generalisation of Theorem 10 in \cite{LP1}.
\begin{corollary}\label{C}
Von Neumann's entropy is lower-semicontinuous on the set of the normalised density matrices.
\end{corollary}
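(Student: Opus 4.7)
The plan is to deduce this directly from part (1) of Theorem~\ref{u-lscont} applied to $\M=\BH$ with the canonical trace. First I would recall the identification $S(h)=-H(h)$ established in the preliminaries, so that lower-semicontinuity of $S$ is equivalent to upper-semicontinuity of $H$ on the set of normalised density matrices (equipped with the $\|\cdot\|_1$-topology, which is the usual trace-norm).

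The key observation is that every normalised density matrix $h$ satisfies $0\leq h\leq\1$, because for a positive trace-class operator the operator norm is bounded by the trace: $\|h\|_\infty\leq\tr h=1$. Consequently $h^{1+\varepsilon}\leq h$ for any $\varepsilon>0$, so that
\[
 \tr(h^{1+\varepsilon})\leq\tr(h)=1.
\]
In other words, for any fixed $\varepsilon>0$ the set of normalised density matrices is contained in $\{h\in\E:\tr(h^{1+\varepsilon})\leq 1\}$.

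I would then invoke Theorem~\ref{u-lscont}(1) with $c=1$ (and any chosen $\varepsilon>0$), which asserts that Segal's entropy $H$ is upper-semicontinuous on $\{h\in\E:\tr(h^{1+\varepsilon})\leq 1\}$. Restricting to the subset of normalised density matrices, $H$ remains upper-semicontinuous there, and passing to $S=-H$ gives lower-semicontinuity of von Neumann's entropy on the set of normalised density matrices.

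There is essentially no obstacle, only one cosmetic point worth noting: upper-semicontinuity on an ambient set transfers to upper-semicontinuity on any subset (sequences in the subset are sequences in the ambient set), so no additional argument is needed once the inclusion $\{h\geq 0:\tr h=1\}\subset\{h:\tr(h^{1+\varepsilon})\leq 1\}$ is established.
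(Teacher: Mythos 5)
Your proposal is correct and follows essentially the same route as the paper: observe that every normalised density matrix satisfies $\tr h^{1+\varepsilon}\leq 1$, then apply Theorem~\ref{u-lscont}(1) and the relation $S=-H$. You merely fill in the (easy) justification of the inequality $\tr h^{1+\varepsilon}\leq 1$, which the paper states without proof.
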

Indeed, for a normalised density matrix $h$ the inequality
\[
 \tr h^{1+\varepsilon}\leq1
\]
holds for arbitrary $\varepsilon>0$, thus the corollary follows from point (1) of the theorem taking into account that von Neumann's entropy equals minus Segal's entropy.
\begin{corollary}
Let $\varepsilon>0$ and $c>0$ be arbitrary. Von Neumann's entropy is finite and continuous on the set of normalised density matrices $h$ such that $\tr h^{1-\varepsilon}\leq c$.
\end{corollary}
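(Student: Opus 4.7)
The plan is to extract finiteness from an elementary pointwise estimate on $-\lambda\log\lambda$ and to obtain continuity by assembling the two semicontinuity halves already provided by Theorem~\ref{u-lscont}.

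For finiteness I would first verify the pointwise bound
\[
 -\lambda\log\lambda\leq\frac{1}{e\varepsilon}\,\lambda^{1-\varepsilon},\qquad \lambda\in(0,1],
\]
via the substitution $u=1/\lambda$, using that $(\log u)/u^\varepsilon$ attains its maximum $1/(e\varepsilon)$ at $u=e^{1/\varepsilon}$. Since the eigenvalues of a normalised density matrix lie in $[0,1]$, summation over the spectrum gives
\[
 S(h)\leq\frac{1}{e\varepsilon}\tr h^{1-\varepsilon}\leq\frac{c}{e\varepsilon},
\]
so $S(h)$ is finite on the set in question; equivalently $H(h)=-S(h)$ is finite, so $h\in\E_{\text{fin}}$ and the preceding theorem is applicable.

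For continuity I would combine lower- and upper-semicontinuity of $S$ on the set. Lower-semicontinuity is Corollary~\ref{C} and needs no extra hypothesis: since $\tr h^{1+\varepsilon}\leq\tr h=1$ automatically for a normalised density matrix, Theorem~\ref{u-lscont}(1) gives upper-semicontinuity of $H$, i.e.\ lower-semicontinuity of $S$. Upper-semicontinuity of $S$ on the constrained set comes from Theorem~\ref{u-lscont}(2) applied to $H=-S$: along any $\|\cdot\|_1$-convergent sequence $(h_n)$ in $\{h:\tr h^{1-\varepsilon}\leq c\}$ with limit $h_0$ in the same set, the inequality $\liminf_n H(h_n)\geq H(h_0)$ translates to $\limsup_n S(h_n)\leq S(h_0)$. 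Sandwiching the two inequalities forces $\lim_n S(h_n)=S(h_0)$.

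The only real work is the pointwise comparison $-\lambda\log\lambda\leq(e\varepsilon)^{-1}\lambda^{1-\varepsilon}$; after that the two clauses of Theorem~\ref{u-lscont}, combined with the trivial bound $\tr h^{1+\varepsilon}\leq 1$ for density matrices, close the argument, and I anticipate no substantive obstacle.
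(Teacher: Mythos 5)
Your proposal is correct and follows essentially the same route as the paper: finiteness by comparing $-\lambda\log\lambda$ with $\lambda^{1-\varepsilon}$ and invoking $\tr h^{1-\varepsilon}\leq c$, then continuity by combining Theorem~\ref{u-lscont}(1) (via $\tr h^{1+\varepsilon}\leq\tr h=1$, i.e.\ Corollary~\ref{C}) with Theorem~\ref{u-lscont}(2). The only cosmetic difference is that your pointwise bound $-\lambda\log\lambda\leq(e\varepsilon)^{-1}\lambda^{1-\varepsilon}$ is uniform on $(0,1]$ and yields the explicit estimate $S(h)\leq c/(e\varepsilon)$, whereas the paper applies the comparison only to the tail of the eigenvalue sequence.
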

Indeed, let
\[
 h=\sum_{n=1}^\infty\lambda_ne_n
\]
be the spectral representation of the density matrix $h$. We may assume that $\lambda_1>\lambda_2>\dots$. Since $\lambda_n\to0$, there is $n_0$ such that for $n\geq n_0$ we have
\[
 \log\frac{1}{\lambda_n}\leq\bigg(\frac{1}{\lambda_n}\bigg)^\varepsilon.
\]
Then
\begin{align*}
 -\sum_{n=n_0}^\infty\lambda_n\log\lambda_n\tr e_n&=\sum_{n=n_0}^\infty\lambda_n\log\frac{1}{\lambda_n}\tr e_n \leq\sum_{n=n_0}^\infty\lambda_n\bigg(\frac{1}{\lambda_n}\bigg)^\varepsilon\tr e_n\\
 &=\sum_{n=n_0}^\infty\lambda_n^{1-\varepsilon}\tr e_n\leq\tr h^{1-\varepsilon}\leq c,
\end{align*}
which shows that the von Neumann entropy of $h$ which equals\\ $\displaystyle{-\sum_{n=1}^\infty\lambda_n\log\lambda_n\tr e_n}$ is finite. The upper-semicontinuity of von Neuman's entropy follows from point (2) of the theorem while its lower-semicontinuity follows from Corollary \ref{C}.
\begin{remark}
It is interesting to compare the condition on the continuity of the von Neumann entropy in the corollary above with the same problem considered in \cite{Sh}.
\end{remark}
\begin{corollary}
Let $\M$ be finite, and let $\varepsilon>0$ and $c>0$ be arbitrary. Segal's entropy is finite and continuous on the set
\[
 \{h\in L^1(\M,\tau)^+:\tau(h^{1+\varepsilon})\leq c\}.
\]
\end{corollary}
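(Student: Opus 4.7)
The statement has two parts: finiteness of $H(h)$ and continuity of $H$ on the given set. For continuity, both halves of Theorem~\ref{u-lscont} are immediately applicable. Part~(1) yields upper-semicontinuity on $\{h\in\E:\tau(h^{1+\varepsilon})\leq c\}$, which coincides with our set because $\E=L^1(\M,\tau)^+$ whenever $\M$ is finite. Part~(3) yields lower-semicontinuity on all of $L^1(\M,\tau)^+$ in the finite case. Combining the two semicontinuities gives continuity at every point of the set.

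The only genuine content is the finiteness of $H(h)$. The lower bound $H(h)>-\infty$ is not an issue: as already noted in Section~1, for finite $\M$ the inequality $t\log t\geq t-1$ gives $H(h)\geq\tau(h)-\tau(\1)>-\infty$. For the upper bound $H(h)<+\infty$ the plan is to reuse the same elementary fact exploited in the proof of Theorem~\ref{u-lscont}: for every $\varepsilon>0$ there is $r=r(\varepsilon)$ with $\log t\leq t^{\varepsilon}$ for all $t\geq r(\varepsilon)$. Splitting the spectral integral, I would write
\[
H(h)=\int_0^{r(\varepsilon)}t\log t\,\tau(e(dt))+\int_{r(\varepsilon)}^\infty t\log t\,\tau(e(dt)).
\]
The first integral is finite because $t\log t$ is bounded on $[0,r(\varepsilon)]$ and $\tau(e([0,r(\varepsilon)]))\leq\tau(\1)<+\infty$, using crucially that $\M$ is finite. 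The second integral is bounded above by $\int_{r(\varepsilon)}^\infty t^{1+\varepsilon}\,\tau(e(dt))\leq\tau(h^{1+\varepsilon})\leq c$.

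I do not anticipate any real obstacle: the whole argument is an assembly of estimates that have already appeared, the essential new ingredients being that the hypothesis $\M$ finite delivers simultaneously $\E=L^1(\M,\tau)^+$, $\tau(\1)<+\infty$, and access to part~(3) of Theorem~\ref{u-lscont}. If anything, the only place that requires a moment of care is confirming that the near-zero contribution $\int_0^1 t\log t\,\tau(e(dt))$ is harmless, which is settled by $|t\log t|\leq 1/e$ on $[0,1]$ together with the finiteness of $\tau(\1)$.
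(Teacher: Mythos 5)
Your proposal is correct and follows essentially the same route as the paper: the paper likewise derives continuity by combining points (1) and (3) of Theorem~\ref{u-lscont}, and proves finiteness by splitting the spectral integral at a point $a>1$ beyond which $\log t\leq t^{\varepsilon}$, bounding the tail by $\tau(h^{1+\varepsilon})\leq c$ and the initial piece using the boundedness of $t\log t$ together with $\tau(\1)<+\infty$. The only cosmetic difference is that the paper does not bother to record the lower bound $H(h)>-\infty$ separately, since that was already established for finite $\M$ in the preliminaries.
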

Indeed, there is $a>1$ such that for $t>a$ we have
\[
 \log t\leq t^\varepsilon.
\]
Then
\begin{align*}
 H(h)&=\int_0^\infty t\log t\,\tau(e(dt))=\int_0^at\log t\,\tau(e(dt))+\int_a^\infty t\log t\,\tau(e(dt))\\
 &\leq a\log a\,\tau(e([0,a]))+\int_a^\infty t^{1+\varepsilon}\,\tau(e(dt))\\
 &\leq a\log a+\tau(h^{1+\varepsilon})\leq a\log a+c<+\infty.
\end{align*}
The continuity of $H$ follows from points (1) and (3) of the theorem.

\section{Ideal-like structure of operators with finite Segal entropy}
It is well-known that the function $t\mapsto\log t$ is operator monotone on $\BH^+$ for finite dimensional $\Ha$. If the dimension of $\Ha$ is infinite, then $\log h$ for selfadjoint positive $h$ is in general unbounded, even for bounded $h$, and obvious problems with domains arise. We shall need the operator monotonicity of the logarithmic function in a simplified version.
\begin{lemma}\label{l}
Let $\M$ be finite. For arbitrary $h_1,h_2\in\widetilde{\M}$ such that\\ $\1\leq h_1\leq h_2$ we have
\[
 \log h_1\leq\log h_2.
\]
\end{lemma}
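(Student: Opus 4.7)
The plan is to use the Pick--Nevanlinna integral representation of the scalar logarithm,
\[
 \log t = \int_0^\infty \Big(\frac{1}{1+s} - \frac{1}{t+s}\Big)\,ds, \qquad t > 0,
\]
which is exactly the representation underlying formula~\eqref{fmM}. Since $h_1, h_2 \geq \1$, both operators are invertible in $\widetilde{\M}$ and, for every $s \geq 0$, we have $h_i + s\1 \geq (1+s)\1$, so the resolvents $(h_i + s\1)^{-1}$ are bounded elements of $\M$ of norm at most $(1+s)^{-1}$. The functional calculus therefore yields the operator identity
\[
 \log h_i = \int_0^\infty \Big(\frac{1}{1+s}\1 - (h_i + s\1)^{-1}\Big)\,ds, \qquad i = 1, 2,
\]
whose integrand, viewed through functional calculus as $t \mapsto (t-1)/((1+s)(t+s))$, is nonnegative on $[1,\infty)$ and decays like $s^{-2}$ at infinity.

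Next I would extract the pointwise (in $s$) operator inequality. From $h_1 \leq h_2$ with $h_i + s\1 \geq (1+s)\1 > 0$, the elementary fact that inversion reverses order on positive invertible operators gives
\[
 (h_2 + s\1)^{-1} \leq (h_1 + s\1)^{-1}, \qquad s \geq 0,
\]
which is an inequality between bounded elements of $\M$ and so presents no domain difficulty. Subtracting from $\frac{1}{1+s}\1$ reverses the sign and produces
\[
 \frac{1}{1+s}\1 - (h_1 + s\1)^{-1} \leq \frac{1}{1+s}\1 - (h_2 + s\1)^{-1};
\]
integrating over $s \in (0,\infty)$ would then yield $\log h_1 \leq \log h_2$.

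The main obstacle is to make the operator integral representation of $\log h_i$ rigorous when $h_i$ is unbounded, because then $\log h_i$ is itself unbounded and the integral cannot be taken in $\|\cdot\|_1$-norm as in the proof of \eqref{fmM}. A clean workaround, available because $\M$ is finite, is truncation: the function $\phi_n(t) = \min(t, n)$ is operator monotone, so $h_i^{(n)} := \phi_n(h_i) \in \M$ satisfies $\1 \leq h_1^{(n)} \leq h_2^{(n)}$ with spectrum in $[1, n]$. For bounded operators with spectrum in a fixed compact subinterval of $(0,\infty)$ the above representation becomes an honest norm-convergent integral, so the argument of the previous paragraph goes through unchanged and gives $\log h_1^{(n)} \leq \log h_2^{(n)}$. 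Since $h_i^{(n)} \nearrow h_i$ in the strong resolvent sense, $\log h_i^{(n)} \nearrow \log h_i$ in the same sense, and the inequality passes to the limit as an inequality between self-adjoint operators affiliated with $\M$.
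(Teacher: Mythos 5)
Your overall mechanism --- the representation $\log t=\int_0^\infty\bigl(\frac{1}{1+s}-\frac{1}{t+s}\bigr)\,ds$ combined with the antimonotonicity of inversion, $(s\1+h_2)^{-1}\leq(s\1+h_1)^{-1}$ --- is exactly the one the paper uses, and those two ingredients are sound. The genuine gap is in your reduction to the bounded case: the function $\phi_n(t)=\min(t,n)$ is \emph{not} operator monotone. By Loewner's theorem every operator monotone function on an open interval is analytic, whereas $\min(t,n)$ has a corner at $t=n$; consequently $h_1\leq h_2$ does not imply $\min(h_1,n\1)\leq\min(h_2,n\1)$, and the inequality $\1\leq h_1^{(n)}\leq h_2^{(n)}$ on which your whole truncation step rests can fail already for $2\times2$ matrices. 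Since the bounded-case argument is only ever applied to the truncates, the proof as written does not go through.

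The gap is repairable in two ways. You could keep the truncation scheme but replace $\min(t,n)$ by a genuinely operator monotone cutoff such as $\phi_n(t)=nt/(n+t)$, which maps $[1,\infty)$ into the compact interval $[\tfrac{n}{n+1},n]$, preserves the order $h_1^{(n)}\leq h_2^{(n)}$, and increases pointwise to $t$, so your bounded-case argument and the passage to the limit (understood in the quadratic-form sense on a common core, which is available because the operators are measurable over a finite algebra) both apply. Alternatively --- and this is what the paper does --- you can dispense with truncation entirely: writing $\langle(\log h_i)\xi|\xi\rangle$ via the spectral measure of $h_i$ and applying Fubini (the integrand $\frac{1}{s+1}-\frac{1}{s+t}$ is nonnegative for $t\geq1$) gives directly
\[
 \langle(\log h_i)\xi|\xi\rangle=\int_0^\infty\Big\langle\Big(\frac{1}{s+1}\1-(s\1+h_i)^{-1}\Big)\xi\Big|\xi\Big\rangle\,ds
\]
for $\xi\in\mathcal{D}(\log h_1)\cap\mathcal{D}(\log h_2)$, after which your resolvent inequality finishes the proof with no limiting procedure needed.
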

\begin{proof}
The proof is similar to that for the finite dimensional case and is based on the representation
\[
 \log t=\int_0^\infty\Big(\frac{1}{s+1}-\frac{1}{s+t}\Big)\,ds.
\]
Let
\[
 h_1=\int_1^\infty t\,e_1(dt)
\]
and
\[
 h_2=\int_1^\infty t\,e_2(dt)
\]
be the spectral representations of $h_1$ and $h_2$, respectively. The operators $\log h_1$ and $\log h_2$ are measurable, and for arbitrary\\ $\xi\in\mathcal{D}(\log h_1)\cap\mathcal{D}(\log h_2)$ we have, on account of the Fubini theorem and the fact that the function under the integral sign is nonnegative,
\begin{equation}\label{h1}
 \begin{aligned}
  \langle(\log h_1)\xi|\xi\rangle&=\int_1^\infty\Big(\int_0^\infty\Big(\frac{1}{s+1}-\frac{1}{s+t}\Big)ds\Big)\|e_1(dt)\xi\|^2\\
  &=\int_0^\infty\Big(\int_1^\infty\Big(\frac{1}{s+1}-\frac{1}{s+t}\Big)\|e_1(dt)\xi\|^2\Big)ds\\
  &=\int_0^\infty\Big\langle\Big(\frac{1}{s+1}\1-(s\1+h_1)^{-1}\Big)\xi|\xi\Big\rangle\,ds.
 \end{aligned}
\end{equation}
By the same token, we obtain
\begin{equation}\label{h2}
 \begin{aligned}
  \langle(\log h_2)\xi|\xi\rangle&=\int_1^\infty\Big(\int_0^\infty\Big(\frac{1}{s+1}-\frac{1}{s+t}\Big)ds\Big)\|e_2(dt)\xi\|^2\\
  &=\int_0^\infty\Big\langle\Big(\frac{1}{s+1}\1-(s\1+h_2)^{-1}\Big)\xi|\xi\Big\rangle\,ds.
 \end{aligned}
\end{equation}
Since
\[
 s\1+h_1\leq s\1+h_2,
\]
we have
\[
 \big(s\1+h_2\big)^{-1}\leq\big(s\1+h_1\big)^{-1},
\]
and thus, taking into account the relations \eqref{h1} and \eqref{h2}, we get
\begin{align*}
 \langle(\log h_1)\xi|\xi\rangle&=\int_0^\infty\Big\langle\Big(\frac{1}{s+1}\1-(s\1+h_1)^{-1}\Big)\xi|\xi\Big\rangle\,ds\\
 &\leq\int_0^\infty\Big\langle\Big(\frac{1}{s+1}\1-(s\1+h_2)^{-1}\Big)\xi|\xi\Big\rangle\,ds=\langle(\log h_2)\xi|\xi\rangle.
\end{align*}
This yields that $\log h_1\leq\log h_2$ on $\mathcal{D}(\log h_1)\cap\mathcal{D}(\log h_2)$, and the measurability of $\log h_1$ and $\log h_2$ proves the claim.
\end{proof}
\begin{remark}
The lemma above yields the inequality
\begin{equation}\tag{$\ast$}
 \log h_1\leq\log h_2,
\end{equation}
where $h_1$ and $h_2$ are selfadjoint positive \emph{invertible} operators in $\widetilde{\M}$ such that $h_1\leq h_2$. Indeed, for arbitrary $\varepsilon>0$ and $0\leq h\in L^1(\M,\tau)$, we have
\[
 \log(h+\varepsilon\1)=\log\varepsilon\Big(\frac{1}{\varepsilon}h+\1\Big)=(\log\varepsilon)\1+\log\Big(\frac{1}{\varepsilon}h+\1\Big),
\]
which gives the inequality
\[
 \log(h_1+\varepsilon\1)\leq\log(h_2+\varepsilon\1).
\]
Moreover, for $\xi\in\mathcal{D}(\log h)$, we have
\[
 (\log(h+\varepsilon\1))\xi\underset{\varepsilon\to0}{\longrightarrow}(\log h)\xi,
\]
which implies the inequality
\[
 \log h_1\leq\log h_2
\]
on $\mathcal{D}(\log h_1)\cap\mathcal{D}(\log h_2)$, and thus the inequality ($\ast$).
\end{remark}
\begin{lemma}
Let $\M$ be finite, and let $h\in L^1(\M,\tau)^+$. The entropy of $h$ is finite if and only if the entropy of $h+\1$ is finite.
\end{lemma}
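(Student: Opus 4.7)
The plan is to reduce the question to a one-variable calculus comparison via the spectral representation. Since $\M$ is finite, Segal's entropy is well-defined (and bounded below by $\tau(k)-1$) on the whole of $L^1(\M,\tau)^+$, so both $H(h)$ and $H(h+\1)$ lie in $(-\infty,+\infty]$ and finiteness in either case means strictly less than $+\infty$. It therefore suffices to show that $H(h+\1)-H(h)$ is a finite, nonnegative number.

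Writing $h=\int_0^\infty t\,e(dt)$, the functional calculus yields
\[
 H(h+\1)-H(h)=\int_0^\infty g(t)\,\tau(e(dt)),\qquad g(t)=(t+1)\log(t+1)-t\log t,
\]
with the convention $0\log 0=0$. Two elementary estimates on $g$ then do all the work. Since $g'(t)=\log(1+1/t)>0$ for $t>0$ and $\lim_{t\to 0^+}g(t)=0$, we have $g\geq 0$ on $[0,\infty)$; rewriting $g(t)=t\log(1+1/t)+\log(1+t)$ and using $\log(1+x)\leq x$ gives $g(t)\leq 1+\log(1+t)\leq 1+t$. Combined with $\tau(\1)<+\infty$ and $\tau(h)<+\infty$, this produces
\[
 0\leq H(h+\1)-H(h)\leq \tau(\1)+\tau(h)<+\infty,
\]
from which the equivalence is immediate.

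The main point of caution, and the one step I would expect to require a careful word, is that when $h$ is unbounded the separate integrals $\int_0^\infty t\log t\,\tau(e(dt))$ and $\int_0^\infty(t+1)\log(t+1)\,\tau(e(dt))$ may each be $+\infty$, so the subtraction above needs justification. I would handle this by splitting the spectral integration at $t=1$: on $[0,1]$ both integrands are bounded and $\tau(e([0,1]))\leq\tau(\1)<+\infty$, so the two contributions are finite; on $[1,\infty)$ the integrands are nonnegative and their difference $g$ is dominated by $1+\log(1+t)\leq 1+t$, so the two pieces there are simultaneously finite or $+\infty$. In either case the identity for $H(h+\1)-H(h)$ makes unambiguous sense, and the one-sided estimates on $g$ close the argument.
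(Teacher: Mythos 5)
Your proof is correct and follows essentially the same route as the paper's: both pass to the spectral integral, split it at $t=1$ where the integrands are bounded and the measure $\tau(e(\cdot))$ is finite by finiteness of $\M$, and then compare $t\log t$ with $(t+1)\log(t+1)$ on $(1,\infty)$ by an elementary pointwise estimate that is closed using $\tau(h)<\infty$. The only cosmetic difference is that you bound the difference $g(t)=(t+1)\log(t+1)-t\log t$ above by $1+t$, whereas the paper sandwiches $(t+1)\log(t+1)$ between $t\log t$ and $2(t\log 2+t\log t)$.
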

\begin{proof}
Let
\[
 h=\int_0^\infty t\,e(dt)
\]
be the spectral representation of $h$. Then
\[
 H(h)=\int_0^\infty t\log t\,\tau(e(dt)),
\]
and
\[
 H(h+\1)=\int_0^\infty(t+1)\log(t+1)\,\tau(e(dt)).
\]
Since the functions $t\mapsto t\log t$ and $t\mapsto(t+1)\log(t+1)$ are bounded on the interval $[0,1]$, and the measure $\tau(e(\cdot))$ is finite, the integrals $\int_0^1t\log t\,\tau(e(dt))$ and $\int_0^1(t+1)\log(t+1)\,\tau(e(dt))$ are finite. On the interval $(1,\infty)$ we have
\begin{equation}\label{i}
 t\log t\leq(t+1)\log(t+1)\leq2t\log2t=2(t\log2+t\log t),
\end{equation}
and since
\[
 \int_1^\infty t\,\tau(e(dt))\leq\int_0^\infty t\,\tau(e(dt))=\tau(h)<\infty,
\]
the inequalities \eqref{i} show that the integral $\int_1^\infty t\log t\,\tau(e(dt))$ is finite if and only if the integral $\int_1^\infty(t+1)\log(t+1)\,\tau(e(dt))$ is finite which ends the proof.
\end{proof}
The next proposition and Theorem \ref{main} describe some properties of Segal's entropy analogous to those of von Neumann's entropy. It is of interest to notice how their proofs differ from the proofs of these properties for the von Neumann entropy given in Theorem \ref{idvN} and Proposition \ref{vNen} below where some simple facts about the eigenvalues of trace-class operators are employed.
\begin{proposition}\label{fin}
Let $h_1,h_2\in L^1(\M,\tau)^+$ be such that $h_1\leq h_2$, and assume that the entropy of $h_2$ is finite. Then the entropy of $h_1$ is also finite.
\end{proposition}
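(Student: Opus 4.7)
My plan is to reduce to the shifted operators $h_i+\1$ (where Lemma~\ref{l} becomes available) and then sandwich $\tau(B_1\log B_1)$ between two inequalities obtained from $\log B_1\leq\log B_2$ and $B_1\leq B_2$. The preceding lemma, applicable because $\M$ is finite in this section, says $H(h)<+\infty$ iff $H(h+\1)<+\infty$, so it is enough to derive $H(h_1+\1)<+\infty$ from $H(h_2+\1)<+\infty$. Setting $B_i:=h_i+\1\in L^1(\M,\tau)^+$, we have $\1\leq B_1\leq B_2$, and Lemma~\ref{l} gives $0\leq\log B_1\leq\log B_2$.

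Using cyclicity of the trace, I would first obtain
\[
\tau(B_1\log B_1)=\tau(B_1^{1/2}(\log B_1)B_1^{1/2})\leq\tau(B_1^{1/2}(\log B_2)B_1^{1/2})=\tau(B_1\log B_2)
\]
from $\log B_1\leq\log B_2$, and then
\[
\tau(B_1\log B_2)=\tau((\log B_2)^{1/2}B_1(\log B_2)^{1/2})\leq\tau((\log B_2)^{1/2}B_2(\log B_2)^{1/2})=\tau(B_2\log B_2)
\]
from $B_1\leq B_2$. Chaining these gives $H(h_1+\1)\leq H(h_2+\1)<+\infty$, and the preceding lemma then yields $H(h_1)<+\infty$. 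Since $\M$ is finite, $H(h_1)\geq\tau(h_1)-1>-\infty$ by the bound $t\log t\geq t-1$ recalled in the preliminaries, so $H(h_1)$ is finite.

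The main technical obstacle will be rigorously justifying the cyclic-trace identities when $\log B_2$ is an unbounded measurable operator, so that products like $B_1\log B_2$ need not lie in $L^1(\M,\tau)$. Since every trace that appears is of a non-negative measurable operator, all the equalities and inequalities are to be interpreted in $[0,+\infty]$; the cleanest route is probably to truncate $\log B_2$ via the spectral projections $p_k:=E_{B_2}([1,k])$ (on which $\log B_2\leq(\log k)\1$), verify the inequalities at the level of the bounded, $L^1$-integrable truncations, and pass to the limit $k\to\infty$ by monotone convergence of the trace.
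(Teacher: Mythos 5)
Your proposal is correct and follows essentially the same route as the paper: reduce to $h_i+\1$ via the preceding lemma, apply Lemma~\ref{l} to get $\log(h_1+\1)\leq\log(h_2+\1)$, and sandwich $\tau((h_1+\1)\log(h_1+\1))$ through the pivot quantity $\tau((h_1+\1)^{1/2}(\log(h_2+\1))(h_1+\1)^{1/2})$ using conjugation by $(h_1+\1)^{1/2}$ and by $\log^{1/2}(h_2+\1)$ respectively. The only cosmetic difference is in handling the unbounded factor $\log(h_2+\1)$: the paper first shows $(h_1+\1)^{1/2}\log^{1/2}(h_2+\1)\in L^2(\M,\tau)$ and invokes $\tau(x^*x)=\tau(xx^*)$, whereas you propose truncation and monotone convergence, which accomplishes the same thing.
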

\begin{proof}
From Lemma \ref{l} we have
\[
 \log(h_1+\1)\leq\log(h_2+\1),
\]
and thus
\begin{equation}\label{in}
 \begin{aligned}
  (h_1+\1)\log(h_1+\1)&=(h_1+\1)^{1/2}(\log(h_1+\1))(h_1+\1)^{1/2}\\
  &\leq(h_1+\1)^{1/2}(\log(h_2+\1))(h_1+\1)^{1/2}.
 \end{aligned}
\end{equation}
Further we have
\begin{align*}
 &(h_2+\1)\log(h_2+\1)=\big(\log^{1/2}(h_2+\1)\big)(h_2+\1)\big(\log^{1/2}(h_2+\1)\big)\\
 \geq&\big(\log^{1/2}(h_2+\1)\big)(h_1+\1)\big(\log^{1/2}(h_2+\1)\big)\\
 =&\big(h_1+\1)^{1/2}\log^{1/2}(h_2+\1)\big)^*\big((h_1+\1)^{1/2}\log^{1/2}(h_2+\1)\big),
\end{align*}
and the finiteness of $H(h_2)$, and thus $H(h_2+\1)$, yields the inequality
\begin{align*}
 &\tau\big(\big((h_1+\1)^{1/2}\log^{1/2}(h_2+\1)\big)^*\big((h_1+\1)^{1/2}\log^{1/2}(h_2+\1)\big)\big)\\
 \leq&\tau((h_2+\1)\log(h_2+\1))<\infty,
\end{align*}
which means that $(h_1+\1)^{1/2}\log^{1/2}(h_2+\1)\in L^2(\M,\tau)$. Consequently, we get
\begin{align*}
 &\tau((h_2+\1)\log(h_2+\1))\\
 \geq&\tau\big(\big((h_1+\1)^{1/2}\log^{1/2}(h_2+\1)\big)^*\big((h_1+\1)^{1/2}\log^{1/2}(h_2+\1)\big)\big)\\
 =&\tau\big(\big((h_1+\1)^{1/2}\log^{1/2}(h_2+\1)\big)\big((h_1+\1)^{1/2}\log^{1/2}(h_2+\1)\big)^*\big)\\
 =&\tau\big((h_1+\1)^{1/2}(\log(h_2+\1))(h_1+\1)^{1/2}\big),
\end{align*}
which together with the relation \eqref{in} gives
\begin{align*}
 \tau((h_1+\1)\log(h_1+\1))&\leq\tau\big((h_1+\1)^{1/2}(\log(h_2+\1))(h_1+\1)^{1/2}\big)\\
 &\leq\tau((h_2+\1)\log(h_2+\1)),
\end{align*}
showing that $h_1+\1$, and thus $h_1$, has finite entropy.
\end{proof}
\begin{lemma}\label{L1}
Let $h\in L^1(\M,\tau)^+$ have finite entropy. Then for arbitrary $z\in\M$ the entropy of $z^*hz$ is also finite.
\end{lemma}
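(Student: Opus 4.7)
The plan is to reduce the problem to Proposition~\ref{fin} by expressing $z^*hz$ as $a^*a$ and controlling the companion operator $aa^*$ by a scalar multiple of $h$, then transferring finiteness of entropy across via the trace identity $\tau(g(a^*a))=\tau(g(aa^*))$.

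Concretely, I would put $a=h^{1/2}z$, which lies in $\widetilde{\M}$ since everything affiliated with a finite $\M$ is measurable. Then $a^*a=z^*hz$ while $aa^*=h^{1/2}zz^*h^{1/2}$. Squeezing the inequality $zz^*\leq\|z\|_\infty^2\1$ between $h^{1/2}$ on both sides yields
\[
 aa^*\leq\|z\|_\infty^2\,h.
\]
A short spectral computation shows $H(ch)=c\log c\cdot\tau(h)+cH(h)$ for any constant $c>0$, which is finite because $\tau(h)<\infty$ in the finite-algebra setting and $H(h)$ is finite by assumption. Proposition~\ref{fin} applied to $h_1=aa^*$ and $h_2=\|z\|_\infty^2h$ then gives $H(aa^*)<+\infty$.

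It remains to show $H(a^*a)=H(aa^*)$. Taking the polar decomposition $a=u|a|$ with $u\in\M$ and $u^*u$ the support projection of $a^*a=|a|^2$, for any Borel $g\geq 0$ with $g(0)=0$ one has $g(aa^*)=u\,g(a^*a)\,u^*$, and by the trace property
\[
 \tau(g(aa^*))=\tau(u\,g(a^*a)\,u^*)=\tau(g(a^*a)\,u^*u)=\tau(g(a^*a)),
\]
the last equality because $g(0)=0$ forces $g(a^*a)$ to be supported on $u^*u$. Writing $f(t)=t\log t=f^+(t)-f^-(t)$ with $f^\pm\geq 0$ and $f^\pm(0)=0$, and noting that $f^-\leq 1/e$ so $\tau(f^-(\cdot))\leq(1/e)\tau(\1)<\infty$ automatically, I apply the identity to $f^+$ and $f^-$ separately. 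Finiteness of $\tau(f^+(aa^*))$ transfers to $\tau(f^+(a^*a))$, and the difference is unambiguous, giving $H(a^*a)=H(aa^*)<+\infty$.

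The delicate step is the trace identity for unbounded measurable $a$: one must verify that the partial isometry from the polar decomposition genuinely intertwines the spectral calculi of $a^*a$ and $aa^*$ on their respective supports, and that $g(0)=0$ is exactly what removes any contribution from the kernels. The rest of the argument is essentially a bookkeeping exercise combining the sandwich inequality with Proposition~\ref{fin}.
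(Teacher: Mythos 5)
Your argument is correct, but it takes a genuinely different route from the paper's. The paper handles $z^*hz$ by a doubling trick: for $\|z\|\leq1$ it forms the unital positive trace-preserving normal map $\Phi(x)=z^*xz+(\1-zz^*)^{1/2}x(\1-zz^*)^{1/2}$, invokes the monotonicity of Segal entropy under such maps (\cite[Theorem 9]{LP2}) to get $H(\Phi(h))\leq H(h)<\infty$, and then concludes from $z^*hz\leq\Phi(h)$ via Proposition~\ref{fin}; the general case follows by the same rescaling identity $H(\|z\|^2h)=\|z\|^2\big(\log\|z\|^2\,\tau(h)+H(h)\big)$ that you use. You instead write $z^*hz=a^*a$ with $a=h^{1/2}z$, observe $aa^*=h^{1/2}zz^*h^{1/2}\leq\|z\|_\infty^2h$, apply Proposition~\ref{fin} to get $H(aa^*)<\infty$, and transfer finiteness to $a^*a$ through the tracial equimeasurability $\tau(g(a^*a))=\tau(g(aa^*))$ for Borel $g\geq0$ with $g(0)=0$. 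Both proofs lean on Proposition~\ref{fin}; what yours buys is the elimination of the external entropy-monotonicity theorem from \cite{LP2} --- a genuinely nontrivial input --- in favour of the elementary polar-decomposition identity for measurable operators, making the argument more self-contained and dispensing with the case split on $\|z\|$. The price is the verification you yourself flag (that the partial isometry intertwines the spectral calculi of $a^*a$ and $aa^*$ away from $0$), which is standard for $\tau$-measurable operators, plus two points used implicitly that should be stated: the standing assumption that $\M$ is finite (needed both for $h^{1/2}z\in\widetilde{\M}$ without domain issues and for your bound $\tau(f^-(\cdot))\leq(1/e)\tau(\1)<\infty$, which is what makes $H$ everywhere defined and reduces finiteness of $H$ to finiteness of $\tau(f^+(\cdot))$), and the observation that $\tau(aa^*)\leq\|z\|_\infty^2\tau(h)<\infty$, so that $aa^*\in L^1(\M,\tau)^+$ and Proposition~\ref{fin} indeed applies.
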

\begin{proof}
Assume first that $\|z\|\leq1$, and define a map $\Phi$ on $L^1(\M,\tau)$ by the formula
\[
 \Phi(x)=z^*xz+(\1-zz^*)^{1/2}x(\1-zz^*)^{1/2}, \quad x\in L^1(\M,\tau).
\]
$\Phi$ is linear positive unital and $\Phi|\M$ is normal. For $x\in L^1(\M,\tau)$, we have $z^*x\in L^1(\M,\tau)$, hence
\[
 \tau(z^*xz)=\tau(zz^*x).
\]
The same holds for $(\1-zz^*)^{1/2}$ instead of $z$, so we obtain
\begin{align*}
 \tau(\Phi(x))&=\tau(z^*xz+(\1-zz^*)^{1/2}x(\1-zz^*)^{1/2}\big)\\
 &=\tau(z^*xz)+\tau\big((\1-zz^*)^{1/2}x(\1-zz^*)^{1/2}\big)\\
 &=\tau(zz^*x)+\tau((\1-zz^*)x)=\tau(x),
\end{align*}
which shows that $\tau$ is $\Phi$-invariant. Let $h\in L^1(\M,\tau)^+$ have finite entropy. By virtue of \cite[Theorem 9]{LP2} we have
\[
 H(\Phi(h))\leq H(h),
\]
so $H(\Phi(h))$ is finite. Since
\[
 z^*hz\leq\Phi(h),
\]
Proposition \ref{fin} yields the finiteness of $H(z^*hz)$.

For arbitrary $z\in\M$, we have
\[
 z^*hz=\bigg(\frac{z}{\|z\|}\bigg)^*\|z\|^2h\bigg(\frac{z}{\|z\|}\bigg),
\]
and since
\begin{align*}
 H\big(\|z\|^2h\big)&=\|z\|^2\tau\big(h\log\big(\|z\|^2h\big)\big)\\
 &=\|z\|^2\big(\log\|z\|^2\tau(h)+H(h)\big)<\infty,
\end{align*}
the first part of the proof yields the claim.
\end{proof}
Let $\Lin\E_{\text{fin}}$ be the linear span of $\E_{\text{fin}}$. Then we have the following theorem about the ideal-like structure of $\Lin\E_{\text{fin}}$.
\begin{theorem}\label{main}
Let $x\in\Lin\E_{\textnormal{fin}}$. Then for arbitrary $y\in\M$ we have\\ $yx\in\Lin\E_{\textnormal{fin}}$ and $xy\in\Lin\E_{\textnormal{fin}}$.
\end{theorem}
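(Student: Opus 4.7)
The plan is based on two observations. First, every element of $\Lin\E_{\textnormal{fin}}$ is by definition a finite complex combination $\sum_{k}c_{k}h_{k}$ with $h_{k}\in\E_{\textnormal{fin}}$, and the maps $x\mapsto yx$, $x\mapsto xy$ are $\mathbb{C}$-linear, so it suffices to prove the two inclusions $hy\in\Lin\E_{\textnormal{fin}}$ and $yh\in\Lin\E_{\textnormal{fin}}$ for a single $h\in\E_{\textnormal{fin}}$ and an arbitrary $y\in\M$. Second, Lemma~\ref{L1} already guarantees that $z^{*}hz\in\E_{\textnormal{fin}}$ for every $z\in\M$, so the only remaining task is to rewrite the one-sided products $hy$ and $yh$ as complex combinations of expressions of the symmetric shape $z^{*}hz$.

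To carry out the rewriting I would apply the polarisation identity to the sesquilinear form $(u,v)\mapsto u^{*}hv$, which is conjugate-linear in $u$, linear in $v$, and positive semidefinite because $h\geq 0$. Specialising to $u=\1$ and $v=y$ yields
\[
 4hy=(\1+y)^{*}h(\1+y)-(\1-y)^{*}h(\1-y)-i(\1+iy)^{*}h(\1+iy)+i(\1-iy)^{*}h(\1-iy).
\]
Each of the four summands has the form $z^{*}hz$ with $z=\1\pm y$ or $z=\1\pm iy$, all lying in $\M$. Applying Lemma~\ref{L1} to each summand places it in $\E_{\textnormal{fin}}$, and taking the indicated complex combination gives $hy\in\Lin\E_{\textnormal{fin}}$.

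For the left-multiplication statement I would exploit the fact that $\E_{\textnormal{fin}}$ consists of positive (hence selfadjoint) operators, so $\Lin\E_{\textnormal{fin}}$ is closed under the involution $x\mapsto x^{*}$ (the adjoint of $\sum c_{k}h_{k}$ is $\sum\overline{c_{k}}h_{k}$). Applying the step just proved with $y^{*}$ in place of $y$ gives $hy^{*}\in\Lin\E_{\textnormal{fin}}$, and then, since $h$ is selfadjoint, $yh=(hy^{*})^{*}\in\Lin\E_{\textnormal{fin}}$.

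The argument is short once the polarisation device is identified, so the only real obstacle is recognising that the right object to polarise is the positive sesquilinear form $(u,v)\mapsto u^{*}hv$ rather than $h$ or $y$ individually; after that, Lemma~\ref{L1} does all the substantive work. In particular the finiteness of $\M$ enters only through Lemma~\ref{L1} (which itself rests on Proposition~\ref{fin} and Lemma~\ref{l}), and no additional analytic estimates are needed.
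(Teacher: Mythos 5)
Your proposal is correct and follows essentially the same route as the paper: reduce to a single $h\in\E_{\textnormal{fin}}$, express the one-sided product via the polarisation identity as a complex combination of terms of the form $z^*hz$ with $z\in\M$ (the paper uses $yh=\frac{1}{4}\sum_{k=0}^{3}i^k(y+i^k\1)h(y+i^k\1)^*$), and invoke Lemma~\ref{L1} on each term. The only cosmetic difference is that you obtain the second product by taking adjoints rather than by symmetry of the same identity.
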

\begin{proof}
Since each $x\in\Lin\E_{\text{fin}}$ is a finite linear combination of elements from $\E_{\text{fin}}$, it is enough to show that for any $h\in L^1(\M,\tau)^+$ with finite entropy, and arbitrary $y\in\M$, we have $yh\in\Lin\E_{\text{fin}}$ and $hy\in\Lin\E_{\text{fin}}$. The following formula holds
\begin{equation}\label{il}
 yh=\frac{1}{4}\sum_{k=0}^3i^k(y+i^k\1)h(y+i^k\1)^*
\end{equation}
and similarly for $hy$. On account of Lemma \ref{L1}, all elements of the form $(y+i^k\1)h(y+i^k\1)^*$ for $k=0,1,2,3$ which occur on the right hand side of the equality above have finite entropy which finishes the proof.
\end{proof}
An analogous result for von Neumann's entropy, namely, that $\Lin\E_{\text{fin}}$ is an ideal in $\BH$, is mentioned in \cite{W} together with a sketch of proof. For the sake of completeness and comparison, we give a simple proof below which differs from that in \cite{W}.
\begin{theorem}\label{idvN}
Let $\E_{\textnormal{fin}}$ be the set of all density matrices in $\BH$ having finite von Neumann's entropy, and let $\Lin\E_{\textnormal{fin}}$ be the linear span of $\E_{\textnormal{fin}}$. Then $\Lin\E_{\textnormal{fin}}$ is a two-sided ideal in $\BH$.
\end{theorem}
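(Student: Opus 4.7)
The plan is to follow the same high-level strategy as the proof of Theorem~\ref{main}: the polarization identity \eqref{il} reduces the problem to showing that for every $h \in \E_{\textnormal{fin}}$ and every $z \in \BH$ the positive trace-class operator $z^*hz$ again has finite entropy. What changes from the Segal case is that in $\BH$ one can argue directly on the eigenvalue sequence $\lambda_n(h) \downarrow 0$ rather than invoking an invariance property of a unital positive map; this is the ``simple fact about the eigenvalues of trace-class operators'' anticipated by the author's remark preceding the theorem.

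First I would dispose of the norm of $z$. The scaling identity
\[
 S(\alpha h) = \alpha S(h) - \alpha \log \alpha \cdot \tr(h), \qquad \alpha > 0,
\]
shows that finite entropy is preserved under positive scalar multiplication, so it is enough to treat $z$ with $\|z\| \leq 1$. For such $z$, the Ky~Fan--Weyl singular value inequality $s_n(AhB) \leq \|A\|\|B\|\, s_n(h)$ gives $s_n(z^*hz) \leq s_n(h)$, and since both $h$ and $z^*hz$ are positive their singular values coincide with their eigenvalues arranged in decreasing order. Writing $\mu_n := \lambda_n(z^*hz)$ and $\lambda_n := \lambda_n(h)$, this yields $\mu_n \leq \lambda_n$ for all $n$.

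The final step is to deduce $S(z^*hz) = -\sum_n \mu_n \log \mu_n < \infty$ from $-\sum_n \lambda_n \log \lambda_n < \infty$. This is where the only mild obstacle arises: the function $\eta(t) := -t\log t$ is not monotone on $[0, \infty)$. However, because $h$ is trace-class, $\lambda_n \to 0$, so there exists $N$ with $\lambda_n \leq 1/e$ for all $n > N$; on $[0, 1/e]$ the function $\eta$ is nondecreasing, so $\eta(\mu_n) \leq \eta(\lambda_n)$ whenever $n > N$, and the tail $\sum_{n>N}\eta(\mu_n)$ is therefore finite. The remaining finitely many terms satisfy $\eta(\mu_n) \leq \max_{t\in[0,\|h\|]}\eta(t) < \infty$, and so $S(z^*hz) < \infty$.

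Substituting this back into \eqref{il} exhibits $yh$ as a linear combination of elements of $\E_{\textnormal{fin}}$, hence $yh \in \Lin\E_{\textnormal{fin}}$; since $\Lin\E_{\textnormal{fin}}$ is self-adjoint (its generators are positive), $hy = (y^*h)^* \in \Lin\E_{\textnormal{fin}}$ as well. Thus $\Lin\E_{\textnormal{fin}}$ is a two-sided ideal in $\BH$. The non-monotonicity of $\eta$ is the only point requiring real care, and it is handled cleanly by the tail bound above, which is the concrete payoff of working with trace-class operators on a Hilbert space rather than with general elements of $L^1(\M,\tau)^+$.
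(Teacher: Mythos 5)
Your proposal is correct and follows essentially the same route as the paper: reduction via the polarization identity \eqref{il}, the singular-value comparison $\lambda_n(z^*hz)\leq\|z\|^2\lambda_n(h)$ (the paper cites Simon's Theorem 1.6 for exactly this), and the monotonicity of $-t\log t$ on $[0,1/e]$ to control the tail of the eigenvalue series. The only cosmetic difference is that you normalize $z$ first via the entropy scaling identity, whereas the paper carries the factor $\|z\|_\infty^2$ through the estimate; both handle the finitely many exceptional terms in the same way.
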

\begin{proof}
By virtue of the formula \eqref{il}, it is enough to show that for a density matrix $h\in\E_{\text{fin}}$ with the von Neumann entropy $S(h)$, and arbitrary $z\in\M$, the von Neumann entropy of $zhz^*$ is finite, moreover, we may assume that $\|h\|_\infty\leq1$ and $\|z\|_\infty\leq1$. Then $zhz^*\leq\1$. Let $1\geq\lambda_1\geq\lambda_2\geq\dots$, and $1\geq\theta_1\geq\theta_2\geq\dots$ be the eigenvalues of $h$ and $zhz^*$, respectively, arranged in decreasing order. Using\\ \cite[Theorem 1.6]{Si} twice, we get the estimate
\[
 \theta_n\leq\|z\|_\infty^2\lambda_n.
\]
Choose $n_0$ such that for $n\geq n_0$ we have
\[
 \|z\|_\infty^2\lambda_n\leq\frac{1}{e}.
\]
Then for such $n$ we have
\begin{align*}
 -\theta_n\log\theta_n&\leq-\|z\|_\infty^2\lambda_n\log\big(\|z\|_\infty^2\lambda_n\big)\\
 &=-\big(\|z\|_\infty^2\log\|z\|_\infty^2\lambda_n+\|z\|_\infty^2\lambda_n\log\lambda_n\big),
\end{align*}
and since the series
\begin{align*}
 &\sum_{n=1}^\infty-\big(\|z\|_\infty^2\log\|z\|_\infty^2\lambda_n+\|z\|_\infty^2\lambda_n\log\lambda_n\big)\\
 =&-\|z\|_\infty^2\log\|z\|_\infty^2\sum_{n=1}^\infty\lambda_n+\|z\|_\infty^2S(h)
\end{align*}
is convergent, we obtain the convergence of the series $\sum\limits_{n=1}^\infty(-\theta_n\log\theta_n)$, showing the claim.
\end{proof}
Let us note the following result, a counterpart of Proposition \ref{fin}, which seems to belong to the folklore of the field.
\begin{proposition}\label{vNen}
Let $h_1,h_2$ be positive trace-class operators in $\BH$ such that $h_1\leq h_2$, and assume that the von Neumann entropy of $h_2$ is finite. Then the von Neumann entropy of $h_1$ is also finite.
\end{proposition}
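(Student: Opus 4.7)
The plan is to imitate the proof of Theorem~\ref{idvN} and reduce everything to a comparison of eigenvalues, exploiting that trace-class operators have eigenvalues tending to zero, a feature unavailable in the Segal setting (which is why Proposition~\ref{fin} required the more elaborate operator-monotonicity argument). Let $\lambda_1\geq\lambda_2\geq\dots$ and $\mu_1\geq\mu_2\geq\dots$ denote the eigenvalues of $h_1$ and $h_2$ arranged in decreasing order and repeated according to multiplicity. The first step is to invoke the min--max characterisation (or equivalently \cite[Theorem 1.6]{Si}), which gives, from $h_1\leq h_2$, the pointwise domination
\[
 \lambda_n\leq\mu_n,\qquad n=1,2,\dots .
\]

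The second step deals with the fact that the function $t\mapsto-t\log t$ is not monotone on $[0,\infty)$: it increases on $[0,1/e]$ and decreases outside it. Because $h_2$ is trace-class we have $\mu_n\to0$, so there exists $n_0$ such that $\mu_n\leq1/e$ for all $n\geq n_0$; by the eigenvalue comparison also $\lambda_n\leq 1/e$ for such $n$. Monotonicity of $-t\log t$ on $[0,1/e]$ then yields
\[
 0\leq-\lambda_n\log\lambda_n\leq-\mu_n\log\mu_n,\qquad n\geq n_0.
\]

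In the third and final step I split both entropies at the index $n_0$,
\[
 S(h_1)=\sum_{n=1}^{n_0-1}(-\lambda_n\log\lambda_n)+\sum_{n=n_0}^{\infty}(-\lambda_n\log\lambda_n),
\]
and similarly for $S(h_2)$. The head is a finite sum, hence finite; the tail is dominated term-by-term by the corresponding tail of $S(h_2)$, which in turn is finite because $S(h_2)<\infty$ and its head is finite. Combining these bounds gives $S(h_1)<\infty$, which is the claim. The only point demanding a little care is the non-monotonicity of $-t\log t$, which is why the argument is phrased in terms of the tails $n\geq n_0$; the eigenvalue comparison itself is a standard application of the variational principle.
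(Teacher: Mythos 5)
Your proof is correct, and from the eigenvalue comparison onwards it coincides exactly with what the paper does: the paper's proof also reduces to $\theta_n\leq\lambda_n$ for the decreasingly ordered eigenvalues and then defers to "the reasoning analogous to that in the proof of Theorem~\ref{idvN}", which is precisely your splitting at an index $n_0$ with $\mu_n\leq 1/e$ and the monotonicity of $-t\log t$ on $[0,1/e]$. The only genuine difference is how the inequality $\lambda_n\leq\mu_n$ is obtained. You invoke the Courant--Fischer min--max principle (Weyl monotonicity), which applies immediately to $h_1\leq h_2$ with no preparation. The paper instead first rescales so that $h_2\leq\1$, then uses the factorisation $h_1^{1/2}=xh_2^{1/2}$ with $\|x\|\leq1$ (the Douglas-type lemma, \cite[Exercise E.2.6]{SZ}) and the singular-value product inequality \cite[Theorem 1.6]{Si} to get $\theta_n^{1/2}\leq\lambda_n^{1/2}$. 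Your route is shorter and avoids both the rescaling and the factorisation lemma; the paper's route has the minor advantage of reusing verbatim the same tool from \cite{Si} already employed in Theorem~\ref{idvN}, so the two proofs can be read in parallel. Either way the conclusion is sound; just note that the min--max step should be stated for compact positive operators (which trace-class operators are), where the decreasing eigenvalue sequence is characterised variationally.
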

\begin{proof}
Rescaling, if necessary, we may assume that $h_2\leq\1$. A simple exercise in Hilbert space theory states that there is an $x\in\BH$ such that $\|x\|\leq1$ and
\[
 h_1^{1/2}=xh_2^{1/2}
\]
(cf. e.g. \cite[Exercise E.2.6]{SZ}). Let $1\geq\theta_1\geq\theta_2\geq\dots$, and $1\geq\lambda_1\geq\lambda_2\geq\dots$ be the eigenvalues of $h_1$ and $h_2$, respectively, arranged in decreasing order. Taking into account that $h_2^{1/2}$ is a compact operator and using again \cite[Theorem 1.6]{Si}, we get the estimate
\[
 \theta_n^{1/2}\leq\lambda_n^{1/2},
\]
i.e.
\[
 \theta_n\leq\lambda_n.
\]
Now the reasoning analogous to that in the proof of Theorem \ref{idvN} yields the claim.
\end{proof}

\section{Topological properties of operators having Segal entropy}
We start with a simple observation.
\begin{theorem}\label{T0}
$\E_{\textnormal{fin}}$ is dense in $L^1(\M,\tau)^+$.
\end{theorem}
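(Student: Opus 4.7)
The plan is to approximate an arbitrary $h \in L^1(\M,\tau)^+$ by its spectral truncations. Writing $h = \int_0^\infty t\,e(dt)$, I would set
\[
 h_n = h\,e([1/n, n]) = \int_{1/n}^n t\,e(dt), \qquad n = 1, 2, \dots,
\]
and prove in turn that (a) $h_n \in \E_{\text{fin}}$ for each $n$, and (b) $\|h - h_n\|_1 \to 0$.

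For (a), the key observation is that the spectral projection $e([1/n, n])$ has finite trace. Indeed, from $h \geq (1/n)\,e([1/n, n])$ one gets $\tau(e([1/n, n])) \leq n\|h\|_1 < \infty$. Since the function $t \mapsto t\log t$ is bounded on $[1/n, n]$ by some constant $C_n$, the spectral formula for the entropy yields
\[
 |H(h_n)| = \bigg|\int_{1/n}^n t\log t\,\tau(e(dt))\bigg| \leq C_n\,\tau(e([1/n,n])) < \infty,
\]
so $h_n \in \E_{\text{fin}}$. (The condition $h_n \in L^1(\M,\tau)^+$ is immediate from $0 \leq h_n \leq h$.)

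For (b), I would introduce the positive Borel measure $\mu$ on $[0,\infty)$ defined by $\mu(B) = \int_B t\,\tau(e(dt))$. Since $\mu([0,\infty)) = \tau(h) = \|h\|_1 < \infty$, $\mu$ is a finite measure, and standard continuity from above/below gives
\[
 \|h - h_n\|_1 = \mu([0, 1/n)) + \mu((n, \infty)) \longrightarrow 0 \quad \text{as } n \to \infty.
\]

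I do not expect a real obstacle here; the only delicate point is to avoid inadvertently invoking $\tau(\1) < \infty$ or $\tau(e([0,1/n))) < \infty$, either of which can fail in the general semifinite setting. The argument above sidesteps both issues by working exclusively with the finite measure $d\mu = t\,d\tau(e(t))$ on $[0,\infty)$ and with the projection $e([1/n,n])$, whose trace is controlled by $n\|h\|_1$.
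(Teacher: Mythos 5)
Your proposal is correct and follows essentially the same route as the paper: truncate the spectrum to a compact interval away from $0$, observe that the corresponding spectral projection has finite trace because $h\geq (1/n)\,e([1/n,n])$, use the boundedness of $t\mapsto t\log t$ there to get finite entropy, and control the tails in $\|\cdot\|_1$ via the finite measure $t\,\tau(e(dt))$. The only difference is cosmetic (a sequence $[1/n,n]$ versus an $\varepsilon$-chosen interval $[m,M]$).
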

\begin{proof}
Take an arbitrary $h\in L^1(\M,\tau)^+$, and let
\[
 h=\int_0^\infty t\,e(dt),
\]
be its spectral decomposition. Let $\varepsilon>0$ be given. Since
\[
 \tau(h)=\int_0^\infty t\,\tau(e(dt))<+\infty,
\]
we can find $0<m<M<+\infty$ such that
\[
 \Big\|\int_0^mt\,e(dt)\Big\|_1=\int_0^mt\,\tau(e(dt))<\varepsilon
\]
and
\[
 \Big\|\int_M^\infty t\,e(dt)\Big\|_1=\int_M^\infty t\,\tau(e(dt))<\varepsilon.
\]
We have
\[
 +\infty>\tau(h)\geq\int_m^\infty t\,\tau(e(dt))\geq m\tau(e([m,+\infty))),
\]
which means that $\tau(e([m,+\infty)))<+\infty$. In particular, it follows that the measure $\tau(e(\cdot))$ is finite on the interval $[m,M]$.
Put
\[
 h'=\int_m^Mt\,e(dt).
\]
Then
\[
 H(h')=\int_m^Mt\log t\,\tau(e(dt)),
\]
and since the function $t\mapsto t\log t$ is bounded on the interval $[m,M]$, the entropy of $h'$ is finite. Moreover,
\begin{align*}
 \|h-h'\|_1&=\Big\|\int_0^mt\,e(dt)+\int_M^\infty t\,e(dt)\Big\|_1\\
 &=\int_0^mt\,\tau(e(dt))+\int_M^\infty t\,\tau(e(dt))<2\varepsilon,
\end{align*}
which shows the claim.
\end{proof}

\begin{lemma}\label{L2}
Let $\al_n>0$ be such that $\displaystyle{\sum_{n=1}^\infty\al_n<+\infty}$. There exists a sequence $(\lambda_n)$ of positive numbers such that
\[
 \sum_{n=1}^\infty\al_n\lambda_n<+\infty \qquad \text{and} \qquad \sum_{n=1}^\infty\al_n\lambda_n\log\lambda_n=+\infty.
\]
\end{lemma}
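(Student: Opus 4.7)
The plan is to construct $(\lambda_n)$ so that it is extremely large on a sparse subsequence and harmless elsewhere, exploiting the fact that $\sum \alpha_n < \infty$ forces $\alpha_n \to 0$ and hence allows us to pick indices where $\alpha_{n_k}$ decays as fast as we please.

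First I would observe that since $\sum_n \al_n < +\infty$, the terms tend to zero, so I can extract inductively a strictly increasing sequence of indices $n_1 < n_2 < \dots$ with
\[
 \al_{n_k}\leq e^{-k^2},\qquad k=1,2,\dots
\]
Next I would set
\[
 \lambda_{n_k}=\frac{1}{k^2\al_{n_k}}\quad\text{for each }k,\qquad \lambda_n=1\ \text{for }n\notin\{n_k:k\geq1\}.
\]
Note that $\lambda_{n_k}\geq e^{k^2}/k^2\geq e$ for all $k$, so in particular $\log\lambda_{n_k}>0$, which removes any sign issue in what follows.

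For the first sum, I would split the series into the $n_k$-terms and the rest:
\[
 \sum_{n=1}^\infty\al_n\lambda_n=\sum_{n\notin\{n_k\}}\al_n+\sum_{k=1}^\infty\al_{n_k}\cdot\frac{1}{k^2\al_{n_k}}\leq\sum_{n=1}^\infty\al_n+\sum_{k=1}^\infty\frac{1}{k^2}<+\infty.
\]
For the second sum, the non-subsequence indices contribute $\al_n\cdot1\cdot\log1=0$, so only the $n_k$-terms matter, and
\[
 \al_{n_k}\lambda_{n_k}\log\lambda_{n_k}=\frac{1}{k^2}\log\frac{1}{k^2\al_{n_k}}=\frac{-2\log k-\log\al_{n_k}}{k^2}\geq\frac{k^2-2\log k}{k^2}\xrightarrow[k\to\infty]{}1,
\]
so $\sum_{k=1}^\infty\al_{n_k}\lambda_{n_k}\log\lambda_{n_k}=+\infty$, and consequently $\sum_n\al_n\lambda_n\log\lambda_n=+\infty$.

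There is no real obstacle here; the only point requiring care is the balancing of growth rates, namely that $\al_{n_k}$ is chosen small enough ($\leq e^{-k^2}$) for $-\log\al_{n_k}\geq k^2$ to dominate the normalising factor $1/k^2$ from the prescribed size $\al_{n_k}\lambda_{n_k}=1/k^2$. This is exactly the source of the divergence, while the chosen size keeps the first series comparable to $\sum 1/k^2$.
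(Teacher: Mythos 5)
Your construction is correct and is essentially the paper's own: both pick a sparse subsequence where $\al$ is very small, set $\lambda=\frac{1}{k^2\al}$ there and $\lambda=1$ elsewhere, so the first series is dominated by $\sum\al_n+\sum k^{-2}$ while the second diverges; the only cosmetic difference is that you demand $\al_{n_k}\leq e^{-k^2}$ (making the surviving terms tend to $1$) where the paper uses $\al_{k_n}\leq 2^{-n}$ (making them comparable to $\tfrac{\log 2}{n}$). No issues.
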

\begin{proof}
Choose a sequence $(k_n)$ such that $\al_{k_n}\leq\frac{1}{2^n}$, and put
\[
 \lambda_r=\begin{cases}
  \frac{1}{\al_{k_n}n^2} & \text{for $r=k_n$}\\
  1 & \text{otherwise}
 \end{cases}.
\]
Then
\[
 \lambda_{k_n}\geq\frac{2^n}{n^2},
\]
thus we get
\[
 \sum_{n=1}^\infty\al_n\lambda_n=\sum_{n=1}^\infty\frac{1}{n^2}+\sum_{r\ne k_n}\al_r<+\infty,
\]
and
\begin{align*}
 \sum_{n=1}^\infty\al_n\lambda_n\log\lambda_n&\geq\sum_{n=1}^\infty\frac{1}{n^2}\log\frac{2^n}{n^2}\\
 &=\sum_{n=1}^\infty\frac{1}{n^2}(n\log2-2\log n)=+\infty. \qedhere
\end{align*}
\end{proof}

\begin{theorem}\label{T1}
Let $\M$ be finite. $\E_\infty$ is not dense in $L^1(\M,\tau)^+$ if and only if $\M$ is *-isomorphic to a von Neumann algebra acting on a finite dimensional Hilbert space (which, in turn, is equivalent to the equality $\E_\infty=\emptyset$).
\end{theorem}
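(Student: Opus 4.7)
The plan is to prove the two directions of the equivalence separately.

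For the easy direction ($\Leftarrow$), I would observe that a von Neumann algebra acting on a finite-dimensional Hilbert space is itself finite-dimensional as an algebra; in particular every $h\in L^1(\M,\tau)^+$ has only finitely many non-zero eigenvalues, so $H(h)$ is a finite sum $\sum_i\lambda_i\log\lambda_i\,\tau(e_i)$ of finite real numbers and is therefore finite. Hence $\E_\infty=\emptyset$, so a fortiori $\E_\infty$ is not dense in $L^1(\M,\tau)^+$.

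For the converse I would argue by contrapositive: assuming $\M$ is not $*$-isomorphic to a von Neumann algebra on a finite-dimensional Hilbert space (equivalently, $\M$ is infinite-dimensional as an algebra), I would show $\E_\infty$ is dense in $L^1(\M,\tau)^+$. The overall reduction is the following: given $h\in L^1(\M,\tau)^+$ and $\varepsilon>0$, I construct $g\in L^1(\M,\tau)^+$ with $\|g\|_1<\varepsilon$ and $H(g)\in\{+\infty,-\infty\}$ and set $h':=h+g$. Since $g\leq h'$ and $H(g)$ is not finite, the contrapositive of Proposition \ref{fin} forces $H(h')$ to be non-finite, i.e.\ $h'\in\E_\infty$; clearly $\|h'-h\|_1<\varepsilon$.

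To produce $g$, I would use the infinite-dimensionality of $\M$ and semi-finiteness of $\tau$ to extract mutually orthogonal nonzero projections $(p_n)$ in $\M$ with $\alpha_n:=\tau(p_n)\in(0,\infty)$, and put $g=\sum_n\lambda_n p_n$ for coefficients $\lambda_n>0$ chosen according to the following dichotomy. If $\liminf_n\alpha_n=0$, pass to a subsequence with $\alpha_n\to 0$ and then to a further subsequence with $\sum_n\alpha_n<\infty$; Lemma \ref{L2} (rescaled by a small constant so $\sum\alpha_n\lambda_n<\varepsilon$) then yields $\lambda_n$ with $H(g)=\sum\alpha_n\lambda_n\log\lambda_n=+\infty$. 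Otherwise $\liminf_n\alpha_n\geq\delta>0$ after discarding finitely many terms, and I would take $\lambda_n=c/(\alpha_n\,n(\log n)^2)$ for a small $c>0$: the norm estimate $\sum\alpha_n\lambda_n=c\sum 1/(n(\log n)^2)<\varepsilon$ is immediate, while the dominant contribution to $\sum\alpha_n\lambda_n\log\lambda_n$ should be $-c\sum 1/(n\log n)=-\infty$, so $H(g)=-\infty$.

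The main technical point is verifying, in the second branch of the dichotomy, that the remaining contributions to $\sum\alpha_n\lambda_n\log\lambda_n$---in particular the $-\log\alpha_n$ terms, which may be unbounded if $\alpha_n\to\infty$ along the chosen sequence---do not cancel the $-\log n$ divergence. The lower bound $\alpha_n\geq\delta$ is what makes this work, since it gives $-\log\alpha_n\leq-\log\delta$ and hence an upper bound on the sum of the form $(\text{finite})-c\sum 1/(n\log n)=-\infty$. Given this, the rest is formal: Lemma \ref{L2} and Proposition \ref{fin} do the heavy lifting, and the equivalent formulation ``$\E_\infty=\emptyset$'' of the theorem drops out by combining the two directions.
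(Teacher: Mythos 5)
Your route is in substance the paper's own: the hard direction rests on (i) the structural fact that a finite von Neumann algebra with no infinite family of pairwise orthogonal nonzero projections is $*$-isomorphic to an algebra on a finite-dimensional Hilbert space, (ii) Lemma~\ref{L2} applied with $\al_n=\tau(p_n)$ to manufacture a small positive $g$ with $H(g)=+\infty$ supported on such a family, and (iii) Proposition~\ref{fin} to conclude that $h+g$ has infinite entropy while $\|(h+g)-h\|_1<\varepsilon$. The paper arranges this as ``not dense $\Leftrightarrow$ $\E_\infty=\emptyset$'' first and then characterises emptiness, whereas you argue the contrapositive directly, but the ingredients coincide. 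Two points, however, need repair. The more serious one is the second branch of your dichotomy: since $\M$ is finite, $\sum_n\tau(p_n)=\tau\big(\sum_np_n\big)\leq\tau(\1)<+\infty$, so for any infinite orthogonal family necessarily $\al_n\to0$ and $\sum_n\al_n<+\infty$; the case $\liminf_n\al_n\geq\delta>0$ cannot occur (and $\al_n\to\infty$ is likewise impossible). Moreover the conclusion you draw in that branch is false in this setting: no $g\in L^1(\M,\tau)^+$ with $H(g)=-\infty$ exists in a finite algebra, because $t\log t\geq t-1$ gives $H(g)\geq\tau(g)-\tau(\1)>-\infty$, as recalled in the Preliminaries. (The three-case analysis you are reaching for belongs to the properly semifinite situation of Theorem~\ref{T2}, not here.) The fix is simply to note that $\sum_n\al_n<+\infty$ holds automatically and to invoke Lemma~\ref{L2} once; the rescaling by a small $c>0$ is harmless since $\sum_n\al_n(c\lambda_n)\log(c\lambda_n)=c\log c\sum_n\al_n\lambda_n+c\sum_n\al_n\lambda_n\log\lambda_n=+\infty$, and then Proposition~\ref{fin} (applicable, as $\M$ is finite, so $H$ is everywhere defined and bounded below) forces $H(h+g)=+\infty$.

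The second point is that you assert, rather than prove, that an infinite-dimensional $\M$ contains an infinite orthogonal family of nonzero projections --- equivalently, that finiteness of every resolution of the identity forces $\M$ to be $*$-isomorphic to an algebra on a finite-dimensional space. This is standard, but it is precisely the part of the argument the paper carries out in detail, via minimal projections and the central decomposition into finitely many type I$_{r_i}$ factors with $r_i<\infty$; a complete write-up should either cite this or reproduce that argument. With these two repairs your proof is correct and essentially identical to the paper's.
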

\begin{proof}
Let $h\in L^1(\M,\tau)^+$ have finite entropy. Take an arbitrary $\varepsilon>0$, and assume that there is $h'\in L^1(\M,\tau)^+$ with infinite entropy. It is immediate that for arbitrary $\al>0$ the entropy of $\al h'$ is infinite too, so we may assume that $\|h'\|_1<\varepsilon$. If $h+h'$ had finite entropy, then on account of Proposition \ref{fin} the entropy of $h'\leq h+h'$ would also be finite, consequently, the entropy of $h+h'$ is infinite. Since $\|(h+h')-h\|_1<\varepsilon$, we have found an element with infinite entropy arbitrarily close to $h$. Thus $\E_\infty$ is not dense in $L^1(\M,\tau)^+$ if and only if all elements in $L^1(\M,\tau)^+$ have finite entropy. Assume that this is the case, and let
\[
 \1=\sum_ne_n
\]
be an arbitrary resolution of identity into nonzero projections. Then this resolution must be finite. If not, then we would have
\[
 \1=\sum_{n=1}^\infty e_n, \qquad \sum_{n=1}^\infty\tau(e_n)=1,
\]
and using Lemma \ref{L} for $\al_n=\tau(e_n)$ we could find $\lambda_n>0$ such that
\[
 \sum_{n=1}^\infty \lambda_n\tau(e_n)<+\infty \qquad \text{and} \qquad \sum_{n=1}^\infty\lambda_n\log\lambda_n\tau(e_n)=+\infty.
\]
Now putting
\[
 h=\sum_{n=1}^\infty\lambda_ne_n,
\]
we would have
\[
 \tau(h)=\sum_{n=1}^\infty \lambda_n\tau(e_n)<+\infty,
\]
meaning that $h\in L^1(\M,\tau)^+$, and
\[
 H(h)=\sum_{n=1}^\infty\lambda_n\log\lambda_n\tau(e_n)=+\infty,
\]
a contradiction.

Fix a finite resolution of identity
\[
 \1=\sum_ne_n.
\]
From the fact that this resolution is finite, it follows that each $e_n$ is a finite sum of minimal projections, consequently, we have
\[
 \1=\sum_{n=1}^kp_n,
\]
where the $p_n$'s are minimal projections. For arbitrary central projection $z$ we have
\[
 z=\sum_{n=1}^kzp_n,
\]
and the minimality of $p_n$ yields either $zp_n=0$ or $zp_n=p_n$. Consequently, each central projection is a finite number of some $p_n$'s, in particular, there are only a finite number of central projections. Taking all products of them we get a finite number of minimal central projections $z_1,\dots,z_m$ such that
\[
 \sum_{i=1}^mz_i=\1.
\]
In the central decomposition
\[
 \M=\M_{z_1}\oplus\dots\oplus\M_{z_m},
\]
each $\M_{z_i}$ is a factor of type I${}_{r_i}$ with finite $r_i$ which follows from the minimality of $z_i$ and the finiteness of $\M$. Since each such factor is *-isomorphic to the full algebra $\mathbb{B}(\mathbb{C}^{r_i})$ of operators on the finite dimensional Hilbert space $\mathbb{C}^{r_i}$, we obtain
\[
 \M=\sum_{i=1}^m{}^\oplus\M_{z_i}\simeq\sum_{i=1}^m{}^\oplus\mathbb{B}(\mathbb{C}^{r^i}).
\]
The algebra $\displaystyle{\sum_{i=1}^m{}^\oplus\mathbb{B}(\mathbb{C}^{r^i})}$ is an algebra of operators acting on the finite dimensional Hil\-bert space $\displaystyle{\bigoplus_{i=1}^m\mathbb{C}^{r_i}}$ which shows the first part of the theorem. The reverse part is obvious since for an algebra acting on a finite dimensional Hilbert space all operators $h\log h$ are bounded, consequently, $H(h)=\tau(h\log h)$ is finite.
\end{proof}
\begin{remark}
Observe that along the lines of the proof of this theorem, we obtain the classical result on the denseness of $\E_\infty$ for von Neumann's entropy (cf. \cite{W}). Namely, in Proposition \ref{vNen} it was shown that for arbitrary positive trace-class operators $h_1$ and $h_2$ such that $h_1\leq h_2$ the finiteness of the von Neumann entropy of $h_2$ yields the finiteness of the von Neumann entropy of $h_1$, so the same reasoning as in the beginning of the proof above gives the conclusion.
\end{remark}
We have the following result on the topological structure of $\E_{\text{fin}}$ which again is analogous to the one for von Neumann's entropy (cf. \cite{W}).
\begin{corollary}
Let $\M$ be finite but not *-isomorphic to a von Neumann algebra acting on a finite dimensional Hilbert space. Then $\E_{\textnormal{fin}}$ is a set of the first category in $L^1(\M,\tau)^+$.
\end{corollary}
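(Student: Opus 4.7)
The plan is to exhibit $\E_{\textnormal{fin}}$ as a countable union of closed sets with empty interior, i.e.\ as a countable union of nowhere dense sets.

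First, I would observe that since $\M$ is finite we have $\tau(\1)<+\infty$, and combining this with the pointwise lower bound $t\log t\geq -1/e$ on $[0,\infty)$ one obtains
\[
 H(h)=\int_0^\infty t\log t\,\tau(e(dt))\geq -\tau(\1)/e
\]
for every $h\in L^1(\M,\tau)^+$. In particular $H$ takes values in $[-\tau(\1)/e,+\infty]$, and finiteness of $H(h)$ is the same as $H(h)<+\infty$. This allows us to write
\[
 \E_{\textnormal{fin}}=\bigcup_{n=1}^\infty F_n,\qquad F_n:=\{h\in L^1(\M,\tau)^+:H(h)\leq n\}.
\]

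Next, I would invoke Theorem~\ref{u-lscont}(3), which states that for $\M$ finite the Segal entropy $H$ is lower-semicontinuous on the whole of $L^1(\M,\tau)^+$ in the $\|\cdot\|_1$-norm. This immediately gives that every sublevel set $F_n$ is closed.

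To show each $F_n$ has empty interior, I would use Theorem~\ref{T1}: under the hypothesis that $\M$ is not ${}^*$-isomorphic to a von Neumann algebra acting on a finite dimensional Hilbert space, $\E_\infty$ is nonempty and (by the proof of Theorem~\ref{T1}) dense in $L^1(\M,\tau)^+$. Since any open ball in $L^1(\M,\tau)^+$ therefore contains some $h$ with $H(h)=+\infty$, such a ball cannot be contained in $F_n$. Hence $F_n$ is nowhere dense.

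Putting these pieces together, $\E_{\textnormal{fin}}$ is a countable union of nowhere dense sets and is therefore of the first category in $L^1(\M,\tau)^+$. There is no real obstacle here: the statement is essentially a packaging result combining the lower semicontinuity of $H$ on a finite algebra (Theorem~\ref{u-lscont}(3)) with the density of $\E_\infty$ under the non-finite-dimensional hypothesis (Theorem~\ref{T1}); the only subtlety is checking that $H$ is bounded below, which uses finiteness of $\tau(\1)$.
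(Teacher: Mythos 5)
Your proof is correct and follows essentially the same route as the paper: writing $\E_{\textnormal{fin}}$ as the union of the sublevel sets $\{h:H(h)\leq n\}$, using Theorem~\ref{u-lscont}(3) to see these are closed, and using the density of $\E_\infty$ from Theorem~\ref{T1} to see they have empty interior. Your preliminary remark that $H$ is bounded below on a finite algebra (so that the sublevel sets really exhaust $\E_{\textnormal{fin}}$) is a point the paper settles already in the preliminaries via $t\log t\geq t-1$, but it is a reasonable thing to make explicit.
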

Indeed, from Theorem \ref{u-lscont} it follows that the entropy function $H$ is lower-semicontinuous, thus for any $n$ the sets
\[
 \{h:H(h)\leq n\}
\]
are closed. On account of Theorem \ref{T1}, we infer that for every \linebreak $h\in L^1(\M,\tau)^+$ there is $h'\in\E_\infty$ arbitrarily close to $h$ (in particular, $h'\in L^1(\M,\tau)^+\smallsetminus\{h:H(h)\leq n\}$\big) which means that \\ $h\in\overline{L^1(\M,\tau)^+\smallsetminus\{h:H(h)\leq n\}}$. Consequently,
\begin{align*}
 L^1(\M,\tau)^+&=\overline{L^1(\M,\tau)^+\smallsetminus\{h:H(h)\leq n\}}\\
 &=\overline{L^1(\M,\tau)^+\smallsetminus\overline{\{h:H(h)\leq n\}}},
\end{align*}
showing that $\{h:H(h)\leq n\}$ is nowhere dense. Since
\[
 \E_{\text{fin}}=\bigcup_{n=1}^\infty\{h:H(h)\leq n\},
\]
it follows that $\E_{\text{fin}}$ is a countable union of nowhere dense sets, i.e. it is a set of the first category.

\begin{theorem}\label{T2}
Let $\M$ be semifinite and not finite. Then $\E_\infty$ is dense in $L^1(\M,\tau)^+$.
\end{theorem}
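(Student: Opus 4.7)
The plan is to adapt the strategy from Theorem~\ref{T1} to the semifinite setting. Given $h\in L^1(\M,\tau)^+$ and $\varepsilon>0$, I would produce a positive perturbation $g$ with $\|g\|_1<\varepsilon/2$ and $H(g)=-\infty$ whose support is orthogonal to that of a spectral truncation $h_1$ of $h$: orthogonal supports force $(h_1+g)\log(h_1+g)=h_1\log h_1+g\log g$ via the functional calculus (using $0\log 0=0$), so $H(h_1+g)=H(h_1)+H(g)=-\infty$, placing $h_1+g\in\E_\infty$ within $\varepsilon$ of $h$. I would avoid Proposition~\ref{fin}, since its proof rests on Lemma~\ref{l} which is stated only for finite $\M$.

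The truncation step is taken verbatim from the proof of Theorem~\ref{T0}: choose $0<m<M<\infty$ with $h_1=\int_m^M t\,e(dt)$ satisfying $\|h-h_1\|_1<\varepsilon/2$. The support projection $p_1=e([m,M])$ has $\tau(p_1)\leq\tau(h)/m<\infty$, and $H(h_1)$ is finite because $t\log t$ is bounded on $[m,M]$. Since $\M$ is not finite, $\tau(\1-p_1)=+\infty$, so $\1-p_1$ is an infinite projection in the semifinite algebra $\M$, and by the standard structure theory of semifinite algebras I can extract mutually orthogonal non-zero projections $(q_n)_{n\geq 1}$ with $q_n\leq\1-p_1$ and common trace $\tau(q_n)=c\in(0,\infty)$. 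Take $\lambda_n=1/(n\log^2 n)$ for $n\geq 2$, so that $\sum\lambda_n<\infty$ while $\sum\lambda_n\log\lambda_n=-\infty$ by the integral test; then pick $\beta>0$ small enough that $\beta c\sum\lambda_n<\varepsilon/2$ and set $g=\beta\sum_{n\geq 2}\lambda_n q_n$. This $g$ lies in $L^1(\M,\tau)^+$ with $\|g\|_1<\varepsilon/2$, and a direct spectral computation gives
\[
 H(g)=c\beta(\log\beta)\sum_{n\geq 2}\lambda_n+c\beta\sum_{n\geq 2}\lambda_n\log\lambda_n=-\infty.
\]
Because each $q_n$ is orthogonal to $p_1$, the supports of $h_1$ and $g$ are orthogonal, so $H(h_1+g)=H(h_1)+H(g)=-\infty$ and $\|h-(h_1+g)\|_1\leq\|h-h_1\|_1+\|g\|_1<\varepsilon$, as required.

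The only non-routine step is the extraction of the uniform-trace family $(q_n)$ under $\1-p_1$; this is where the non-finiteness of $\M$ is really used, in conjunction with the structure theory of semifinite algebras (for instance, in a type~I summand $\BH$ one takes the $q_n$ to be pairwise orthogonal rank-one projections, while in a type~II$_\infty$ summand one uses comparability of projections to produce countably many orthogonal copies of any fixed finite-trace projection inside the infinite-trace projection $\1-p_1$). Everything else reduces to the Theorem~\ref{T0}-style truncation, the additivity of $\tau(h\log h)$ on orthogonally supported summands, and the elementary series estimates $\sum 1/(n\log^2 n)<\infty$ and $\sum 1/(n\log n)=+\infty$.
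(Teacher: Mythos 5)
Your proof is correct in substance, but it reaches the key construction by a genuinely different route than the paper. The paper also truncates $h$ to $h_2=\int_m^M t\,e(dt)$ and perturbs by an orthogonally supported element of small norm and infinite entropy, but it takes an \emph{arbitrary} infinite orthogonal family $(e_n)$ of nonzero finite-trace projections under $e([0,m))$ (which exists directly from semifiniteness once one notes $\tau(e([0,m)))=+\infty$) and then runs a three-case analysis on the behaviour of the sequence $(\tau(e_n))$ --- bounded above and below, not bounded away from zero, unbounded --- choosing the eigenvalues $\lambda_n$ differently in each case (and in one case producing $H=+\infty$ rather than $-\infty$). You instead normalise the family first, extracting orthogonal projections of \emph{equal} trace by comparison theory, which collapses the case analysis into the single series computation with $\lambda_n=1/(n\log^2 n)$. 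That is a real simplification, bought at the price of a structural fact about projections that the paper's argument never needs. (Incidentally, the paper also avoids Proposition~\ref{fin} here, exactly as you anticipated it should; the appeal to that proposition occurs only in the finite case, Theorem~\ref{T1}.)

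The one step you should tighten is the justification of the equal-trace family. The inference ``$\tau(\1-p_1)=+\infty$, so $\1-p_1$ is an infinite projection'' is not valid in general: a projection of infinite trace can be finite (e.g.\ the identity of $L^\infty[0,\infty)$ with Lebesgue trace, sitting as a central summand). What saves you is a fact you have already recorded, namely $\tau(p_1)<\infty$: by faithfulness of $\tau$ this makes $p_1$ a \emph{finite} projection, and since $\M$ is not finite the identity is infinite, so $\1-p_1$ must be infinite (an orthogonal sum of two finite projections is finite). From there your mechanism works: $\1-p_1$ dominates a properly infinite projection $q$ with the same central support as the properly infinite summand of $\M$, one halves $q$ into countably many mutually orthogonal copies of itself, and semifiniteness plus comparison places an equivalent (hence equal-trace) copy of a fixed nonzero finite-trace projection under each piece. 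With that repair, the remainder of your argument --- the estimate $\|g\|_1<\varepsilon/2$, the additivity $H(h_1+g)=H(h_1)+H(g)$ for orthogonally supported summands (with $0\log 0=0$ and the positive part of the spectral integral finite, so the entropy is well defined and equal to $-\infty$), and the final triangle inequality --- matches the paper's closing computation and is sound.
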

\begin{proof}
Let $h\in L^1(\M,\tau)^+$ have spectral decomposition
\[
 h=\int_0^\infty t\,e(dt),
\]
and let $\varepsilon>0$ be given. As in the proof of Theorem \ref{T0}, we can find $0<m<M<+\infty$ such that
\[
 \Big\|\int_0^mt\,e(dt)\Big\|_1=\int_0^mt\,\tau(e(dt))<\varepsilon
\]
and
\[
 \Big\|\int_M^\infty t\,e(dt)\Big\|_1=\int_M^\infty t\,\tau(e(dt))<\varepsilon.
\]
Put
\[
 h_1=\int_0^mt\,e(dt), \quad h_2=\int_m^Mt\,e(dt), \quad h_3=\int_M^\infty t\,e(dt),
\]
so
\[
 h=h_1+h_2+h_3.
\]
Since
\[
 e([0,m))+e([m,+\infty))=\1,
\]
and $\tau(e([m,+\infty)))<+\infty$ which was shown in the proof of Theorem~\ref{T0}, we get $\tau(e([0,m)))=+\infty$. On account of the semifiniteness of $\M$, we can find an infinite sequence $(e_n)$ of nonzero pairwise orthogonal projections such that $e_n\leq e([0,m))$ and $\tau(e_n)<+\infty$. Consider the following three possible cases.\\
1. There are $c_1,c_2$ such that $0<c_1\leq\tau(e_n)\leq c_2<+\infty$ for all $n$.

Take a sequence $(\lambda_n)$ such that $0<\lambda_n<1$,
\[
 \sum_{n=1}^\infty\lambda_n<+\infty \qquad \text{and} \qquad \sum_{n=1}^\infty\lambda_n\log\lambda_n=-\infty.
\]
Choose $n_0$ such that $\lambda_n<m$ for $n\geq n_0$, and
\[
 \sum_{n=n_0}^\infty\lambda_n<\frac{\varepsilon}{c_2}.
\]
Putting
\[
 h'_1=\sum_{n=n_0}^\infty\lambda_ne_n,
\]
we get
\[
 \|h'_1\|_1=\tau(h'_1)=\sum_{n=n_0}^\infty\lambda_n\tau(e_n)\leq c_2\sum_{n=n_0}^\infty\lambda_n<\varepsilon,
\]
and
\[
 H(h'_1)=\sum_{n=n_0}^\infty\lambda_n\log\lambda_n\tau(e_n)\leq c_1\sum_{n=n_0}^\infty\lambda_n\log\lambda_n=-\infty.
\]
2. The sequence $(\tau(e_n))$ is not bounded away from zero.

Choose a subsequence $(k_n)$ such that $\tau(e_{k_n})\leq\frac{1}{2^n}$, and put
\[
 \lambda_n=\frac{1}{n^2\tau(e_{k_n})}.
\]
We have
\[
 \lambda_n\geq\frac{2^n}{n^2}.
\]
Choose $n_0\geq4$ such that $\frac{2^n}{n^2}>M$ for $n\geq n_0$, and
\[
 \sum_{n=n_0}^\infty\frac{1}{n^2}<\varepsilon.
\]
Putting
\[
 h'_1=\sum_{n=n_0}^\infty\lambda_ne_{k_n},
\]
we get
\[
 \|h'_1\|_1=\tau(h'_1)=\sum_{n=n_0}^\infty\lambda_n\tau(e_{k_n})=\sum_{n=n_0}^\infty\frac{1}{n^2}<\varepsilon,
\]
and
\begin{align*}
 H(h'_1)&=\sum_{n=n_0}^\infty\lambda_n\log\lambda_n\tau(e_{k_n})\geq\sum_{n=n_0}^\infty\frac{1}{n^2}\log\frac{2^n}{n^2}\\
 &=\sum_{n=n_0}^\infty\Big(\frac{n\log2}{n^2}-\frac{2\log n}{n^2}\Big)=+\infty.
\end{align*}
3. The sequence $(\tau(e_n))$ is unbounded.

Similarly like in case 2, we choose a subsequence $(k_n)$ such that $\tau(e_{k_n})\geq2^n$, and put
\[
 \lambda_n=\frac{1}{n^2\tau(e_{k_n})}.
\]
We have
\[
 \lambda_n\leq\frac{1}{n^22^n}.
\]
Choose $n_0$ such that $\frac{1}{n^22^n}<m$ for $n\geq n_0$, and
\[
 \sum_{n=n_0}^\infty\frac{1}{n^2}<\varepsilon.
\]
Putting
\[
 h'_1=\sum_{n=n_0}^\infty\lambda_ne_{k_n},
\]
we get
\[
 \|h'_1\|_1=\tau(h'_1)=\sum_{n=n_0}^\infty\lambda_n\tau(e_{k_n})=\sum_{n=n_0}^\infty\frac{1}{n^2}<\varepsilon,
\]
and
\begin{align*}
 H(h'_1)&=\sum_{n=n_0}^\infty\lambda_n\log\lambda_n\tau(e_{k_n})\leq\sum_{n=n_0}^\infty\frac{1}{n^2}\log\frac{1}{n^22^n}\\
 &=-\sum_{n=n_0}^\infty\Big(\frac{2\log n}{n^2}+\frac{n\log2}{n^2}\Big)=-\infty.
\end{align*}

Thus in any case we can find an element $h'_1\in L^1(\M,\tau)^+$ with the spectral decomposition
\[
 h'_1=\sum_n\lambda_ne_n
\]
such that $\|h'_1\|_1<\varepsilon$, the spectrum of $h'_1$ lies either in the interval $[0,m)$ (cases 1 and 3) or in the set $(M,+\infty)\cup\{0\}$ (case 2), and the entropy of $h'_1$ is infinite. Put
\[
 h'=h'_1+h_2\in L^1(\M,\tau)^+.
\]
The spectrum of $h_2$ is contained in the set $[m,M]\cup\{0\}$, and the spectrum of $h'_1$ lies outside the interval $[m,M]$, moreover, for the supports of $h'_1$ and $h_2$ we have
\[
 \s(h'_1)\leq e([0,m)) \qquad \text{and} \qquad \s(h_2)\leq e([m,M]),
\]
which means that $h'_1h_2=0$. Consequently, we have the spectral decomposition
\[
 h'=\sum_n\lambda_ne_n+\int_m^Mt\,e(dt),
\]
which yields the equality
\begin{align*}
 H(h')&=\tau(h'\log h')=\sum_n\lambda_n\log\lambda_n\tau(e_n)+\int_m^Mt\log t\,\tau(e(dt))\\
 &=H(h'_1)+\int_m^Mt\log t\,\tau(e(dt)).
\end{align*}
In the proof of Theorem \ref{T0}, it was shown that the integral\\ $\displaystyle{\int_m^Mt\log t\,\tau(e(dt))}$ is finite which means that the entropy of $h'$ is infinite since the entropy of $h'_1$ is such. Finally, we have
\[
 \|h-h'\|_1=\|h_1+h_3-h'_1\|_1\leq\|h_1\|_1+\|h_3\|_1+\|h'_1\|_1<3\varepsilon,
\]
showing that for any $h\in L^1(\M,\tau)^+$ there is an $h'\in L^1(\M,\tau)^+$ arbitrarily close to $h$ and having infinite entropy which yields the claim.
\end{proof}

\end{document}